\theoremstyle{plain}
\newcounter{statements}
\numberwithin{statements}{section}
\numberwithin{equation}{section}
\newtheorem{definition}[statements]{Definition}
\newtheorem{theorem}[statements]{Theorem}
\newtheorem{lemma}[statements]{Lemma}
\newtheorem{conj}[statements]{Conjecture}
\newtheorem{corollary}[statements]{Corollary}
\newtheorem{prop}[statements]{Proposition}
\newenvironment{customlemma}[1]
  {\innercustomlemma}
  {\endinnercustomlemma}
\newcommand{\Mod}[1]{\ (\text{mod}\ #1)}
\newcommand{\cR}{\mathbb{R}}
\newcommand{\cZ}{\mathbb{Z}}
\newcommand{\cN}{\mathbb{N}}
\newcommand{\EE}{\mathbb{E}}
\newcommand{\PP}{\mathbb{P}}
\newcommand{\var}{\mathrm{Var}}
\newcommand{\cov}{\mathrm{Cov}}
\newcommand{\totient}{\varphi}
\newcommand{\floor}[1]{\lfloor#1 \rfloor}
\newcommand{\bigo}[1]{\mathcal{O}\left(#1\right)}
\newcommand{\sP}{\mathcal{P}}
\newcommand{\Li}{{ \rm Li}}
\newcommand{\prefactor}{{c_{Q}}}
\newcommand{\paren}[1]{\left(#1\right)}
\newcommand{\set}[1]{\left\{#1\right\}}
\newcommand{\abs}[1]{\left|#1\right|}
\newcommand{\mc}[1]{\mathfrak{C}_{#1}} 
\newcommand{\sound}{Soundararajan}
\newcommand{\cramer}{Cram\'er}
\newcommand{\gapcut}[2]{[#1]_{#2}}
\newcommand{\GeoApproxLemma}{Fix an integer $Q \geq 2$ and constant $c > 0$. Let $m$ be a non-negative integer. Let $u_i$ denote the $i\textsuperscript{th}$ smallest positive integer co-prime to $Q$. 
Define
$$T_1^m(n) = \sum_{k > 0}\frac{\paren{u_{n + k} - u_n}^m}{\log u_n}\left(1 - \frac{c}{\log u_n}\right)^{k - 1}$$
$$\quad\quad T_2^m(n) = \sum_{k > 0}\left(\frac{\paren{u_{n+k} - u_n}^m}{\log u_{n+k}}\prod_{0 < j < k}\left(1 - \frac{c}{\log u_{n+j}}\right)\right)$$.
Then
$$
  T_2^m(n) = T_1^m(n) + \bigo{\frac{\log(n)^{m + \varepsilon}}{n}}
$$
for any fixed $\varepsilon > 0$ as $n$ tends to infinity.} 
\begin{document}
\title[Prime Running Functions]
{ Prime Running Functions }
\author{Jaeyoon Kim }
\address{Department of Mathematics, University of Michigan, Ann Arbor, MI 48109-1043}
\email{jaeykim@umich.edu}

\begin{abstract}
We study  arithmetic functions $\Phi(x;d,a)$, called prime running functions, whose value at $x$  sums  the gaps between primes $p_k \equiv a \Mod d$ below $x$ and the next following prime $p_{k+1}$, up to $x$.
 (The following prime $p_{k+1}$ may  be in any residue class $\Mod d$.) 
We empirically observe systematic biases of order $x / \log x$ in $\Phi(x;d,a) - \Phi(x;d,b)$ for different $a,b$. We formulate modified \cramer{} models for primes and show that the corresponding sum of prime gap statistics
exhibits systematic biases of this order of magnitude. 
The predictions of such modified \cramer{} models are compared with the experimental  data.
\end{abstract}

\maketitle
\section{Introduction}

This paper studies a new class of prime counting statistics 
based on the size of gaps between primes, where the smaller prime in the gap is restricted to a fixed arithmetic progression.
The {\em prime running function} $\Phi(x; d, a)$ counts the number of integers $n \le x$ having
the property that the largest prime  $p \le n$ satisfies $p \equiv \, a \, (\bmod \, d)$.
Alternatively, these statistics may
be thought of as counting the primes in a fixed arithmetic progression, each  weighted
by the length of the gap from that prime to the next larger prime. 
We present experimental evidence  that
\begin{equation}\label{eq:main1}
\Phi(x; d, a)= \frac{1}{\varphi(d)} x + R(d; a) \frac{x}{\log x} + o( \frac{x}{\log x}),
\end{equation} 
may hold as $x \to \infty$ (Conjecture 2.3). 
In this formula,
even the main term 
$\Phi(x, d,a) \sim \frac{1}{\varphi(d)} x$ is conjectural  for $d \ge 3$  (Conjecture 2.1).
The main term is what one would expect from the mean of gap sizes not depending
on the modulus $a \, (\bmod \, d)$, while the term   $R(d; a)\frac{x}{\log x}$ 
quantifies  a ``bias term'' which is the main focus of this paper.
We  rigorously analyze a probabilistic model (modified \cramer{} model having a preliminary
sieving on a modulus $Q$)
which predicts  a functional form  of   shape  \eqref{eq:main1}, with a bias term present. 
For small moduli $d$, we compare the model prediction for $R(d,a)$, taking $Q$
to be a large primorial, against empirical estimates for $R(d,a)$.

The bias  phenomenon was discovered  in 
  study of  `prime running races' $\Phi(x;d,a) - \Phi(x;d,b)$, 
between two different residue classes $a, b$ (with $(ab, d) =1$). 
Such races are analogous to `prime number races' $\pi(x;d,a) - \pi(x; d, b)$,
on which there has been a large amount of work (see Sect. \ref{subsec:related_work}).
We present  evidence that prime running races have biases asymptotically equivalent to 
$Cx/ \log x$ for some constant $C= C(d; a, b)$. The conjectured formula  \eqref{eq:main1} 
above would give
$C(d; a, b)= R(d;a) - R(d;b)$.    
This bias phenomenon was discovered experimentally
for these statistics  by plotting the simultaneous movements of two prime
running races as $n$ increases on a single figure (Figure~\ref{fig:running}).
We plotted a walk on the square lattice $\cZ^2$ with $X$ component of the walk given by one prime running race and $Y$ component of the walk given by a different prime running race.
One can make similar plots for prime number races $\pi(x;d,a) - \pi(x; d, b)$. 
One sees a great
difference in the appearance of the plots in the two cases. 
The plots for prime number races resemble $2$-dimensional simple random walks, while
the plots for prime running races do not resemble random walks at all, and exhibit systematic biases
increasing with $x$. 
We illustrate this phenomenon with an example.

\subsection{Prime Walk}\label{subsec:11}
The following `prime walk' on the integer lattice $\cZ^2$ takes steps according to the location of the two different prime number races$\Mod 5$ as the variable $n$ increments. We begin the walk
from the origin $(0, 0)$ at time $n=1$. From there, we repeatedly increment $n$
by 1. Whenever $n=p_k$ is a prime, we do the following:
\begin{itemize}
\item if $p_k \equiv 1 \Mod 5 $, move down; add $(0,-1)$
\item if $p_k \equiv 2 \Mod 5 $, move left; add $(-1,0)$
\item if $p_k \equiv 3 \Mod 5 $, move up; add $(0,1)$
\item if $p_k \equiv 4 \Mod 5 $, move right; add $(1,0)$
\end{itemize}
If $n$ is not prime (or if $n = 5$), we do not move. 

\begin{figure}[H]
 \centering \includegraphics[width=0.6\textwidth]{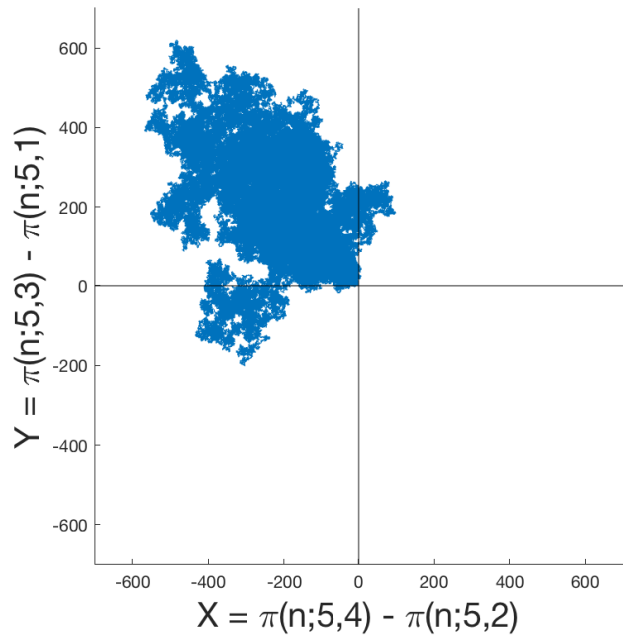}
 \caption{Plot of prime walk for $1 \leq n \leq 10^8$. } \label{fig:prime_walking}
\end{figure}

Figure~\ref{fig:prime_walking} presents the plot of points of the `prime walk' for $n \leq 10^8$.
The $n$-th point of the walk is located at position
$$(\pi(n; 5, 4) - \pi(n; 5, 2),\,\, \pi(n; 5, 3) - \pi(n; 5, 1))\quad \quad 1 \le n \le 10^8.$$
Using the terminology of Granville and Martin~\cite{prime_race}, Figure~\ref{fig:prime_walking}
exhibits the motion of two `prime number races'$\Mod 5$; the $Y$-component demonstrates the race between Team $3$ and Team $1$, while the $X$-component encodes the race
between Team $4$ and Team $2$. The resulting walk exhibits a slight Northwest bias with a maximum magnitude of order $10^3$. The Northwest bias is explained by Chebyshev's bias$\Mod 5$ (see Sect.~\ref{subsec:related_work}).
Qualitatively, figure ~\ref{fig:prime_walking}
resembles a sample path of a simple random
 walk, in that its maximum distance from the
origin is approximately proportional to the square root of the number of steps.

\subsection{Prime Run}
\label{sec:12}
We change the rules of the `prime walk' $(\bmod \, 5)$ above to obtain `\emph{prime run}'.
Whenever $n=p_k$ is prime, we move in the same direction as the prime walk. 
However, the prime run does not stop when $n$ is composite, it continues taking steps in the same direction that we were moving at time $n - 1$. 
Each time $n=p_k$ is prime, we have an opportunity for
changing directions. For the composite values of $n$ in between, we move in a straight
line at unit speed, following the previous direction. 

 To obtain the position when $n = p_{k+1} - 1$, we can apply the following algorithm to the position when $n = p_k - 1$.
\begin{itemize}
\item if $p_k \equiv 1 \Mod 5 $, move down until the next prime; add $(0,-(p_{k+1} - p_{k}))$
\item if $p_k \equiv 2 \Mod 5 $, move left until the next prime; add $(-(p_{k+1} - p_{k}),0)$
\item if $p_k \equiv 3 \Mod 5 $, move up until the next prime; add $(0,p_{k+1} - p_k)$
\item if $p_k \equiv 4 \Mod 5 $, move right until the next prime; add $(p_{k+1}-p_k,0)$
\end{itemize}
If $n= p_3= 5$, we stop the walk until 
the next prime $n=p_4=7$ is reached.
Instead of moving one step, the prime run increments by the magnitude of the gap between primes. 
Since the average gap size between the primes is $x / \pi(x) \sim \log(x)$, one might expect that the prime running plot will look approximately like the prime walk scaled up by a factor of $\log(x)$.

Figure~\ref{fig:running} presents the plot of points of the prime run for $n \leq 10^8$

\begin{figure}[H]
 \centering \includegraphics[width=0.6\textwidth]{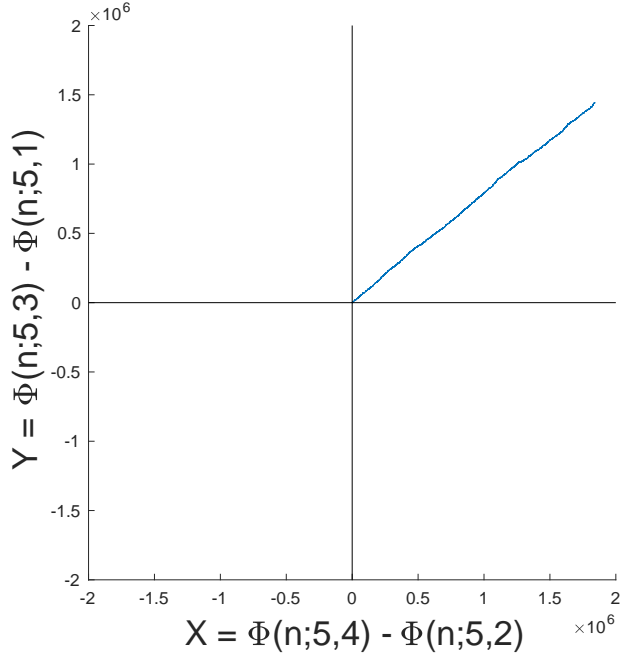}
 \caption{Plot of prime run for $1 \leq n \leq 10^8$.  }\label{fig:running}
\end{figure}

It looks like a line! Also, we observe that the maximum distance reached 
away from the origin is of order $10^6$, which is much larger than the $10^3$ spread for the prime walk.
We observe that the distance of order $10^6$ from the origin reached is considerably smaller than the $10^8$ steps taken, indicating that
the line in the plot has some thickness. Another observation is that the direction of drift in Figure~\ref{fig:running} is different from the direction of the `Chebyshev bias' in the prime walk shown in Figure~\ref{fig:prime_walking}.
Experimentally, this plot of the Prime Run exhibits a much
larger and more sharply focused drift than the drift in the prime walk.

\subsection{Related Work} \label{subsec:related_work}
The study of differences between the number of primes in different residue classes below a threshold $x$ has a long history. 
In the paper ``Comparative Prime Number Theory'' by Knapowski and Turan \cite[Problem 8]{KT62}, the study of $\pi(x;d,a) - \pi(x;d,b)$
was termed the 
(Shanks-Renyi) `prime number race'.
Let $\sP =\{ p_1 < p_2 < \ldots\} $ denote the set of primes, with $p_1=2, p_2=3$ etc. 
We recall that the counting function for primes in arithmetic progression
$a\, (\bmod \, d)$ is
\begin{equation}
  \pi(x;d,a) = \sum_{\substack{p_k \leq x \\p_k \equiv a \Mod d}} 1
\end{equation}
We assume $(a, d)=1$,
so that there are infinitely many primes in the class by Dirichlet's theorem. 

The subject of prime number races trace back to an assertion of Chebyshev \cite{chb1853} in 1853 (without proof) that
\begin{equation}
    \lim_{c \to 0^+}\sum_{n = 1}^\infty (-1)^{\frac{p_n + 1}{2}} e^{-p_n c} = +\infty.
\end{equation}
which gave a sense in which there are more primes of the form $4n + 3$ than of the form $4n + 1$.
In 1916 Hardy and Littlewood \cite{HardyLittlewood} (pg 141 - 148) proved Chebyshev's assertion under the assumption that the Riemann Hypothesis holds for $L(s, \chi_{-4})$.

However, already in 1914 Littlewood \cite{Litt1914} proved that $\pi(x;4,3) - \pi(x;4,1)$ has infinitely many sign changes. In 1995, by assuming the generalized Riemann hypothesis, Kaczorowski \cite{Kacz95} extended Littlewood's result to races between all
pairs of distinct nonzero residue classes$\Mod 5$.
It is now known that 
the lead of many prime races $\pi(x; d, a) - \pi(x; d, b)$ changes 
infinitely many times for many particular pairs of distinct reduced residue classes $a, b$
for many moduli $d$. For a survey on the case of prime moduli $d$, see Granville and Martin~\cite{prime_race}. For a general discussion of the distribution of the primes over different arithmetic progressions, see  Kaczorowoski \cite{KaczSurvey}.

 In 1994, Rubinstein and Sarnak \cite{rs94} introduced another variant of prime number races whiches quantifies the degree to which one race is ahead of another.
 Their framework is to measure
 the set of values of $x$ in which one member of a prime number race is ahead of another 
 using \textit{logarithmic density}. A set $S$ of positive integers has a well-defined logarithmic density
 $d(S)$ if the following limit exists:
 $$
 d(S) := \lim_{x \to \infty} \frac{1}{\log x} \left(\sum_{ \{ n \in S: n\le x\}} \frac{1}{n}\right).
 $$
 Rubinstein and Sarnak showed, assuming strong conjectures on
the distribution of zeros of $L$-functions, that a 
 logarithmic density exists for the set of $x$ 
such that $\pi(x; d, a) > \pi(x; d, b)$, where
$a$ and $b$ are residues$\Mod d$ having $(ab, d) =1$. 
Their analysis predicted that the logarithmic density of $x$ for which $\pi(x; 4, 3) > \pi(x; 4,1)$
is approximately $0.9959$. 
Rubinstein and Sarnak termed this phenomenon ``Chebyshev's bias''. 
See ~Feuerverger and Martin \cite{FM00} and Fiorilli \cite{FM13} for other examples of large biases in this sense. 

 The quantitative sizes of how far one member of a prime number
 race can be ahead of another (of such ``Chebyshev biases'') is 
 always small compared to the average value of these functions separately,
 which is about $\frac{1}{\totient(d)} \frac{x}{\log x}$.
The Prime Number Theorem for arithmetic progressions (\cite[Corollary 11.21]{MV07} and \cite{DavM00}) with $(a, d)=1$ states
 \begin{equation} \label{eqn:PNTAP}
\pi(x; d, a) = \frac{1}{\totient(d)} \Li(x) + \bigo{x e^{-c_d \sqrt{\log x}}},
\end{equation}
where $\Li(x)$ denotes the logarithmic integral $\Li(x) = \int_2^x\frac{dt}{\log t}$
and $c_d$ is some positive constant depending on $d$.
Then each prime number race$\Mod d$ with $\gcd(ab, d) =1$ satisfies
$$
|\pi(x; d, a) - \pi(x; d, b)| = \bigo{x e^{-c_d \sqrt{\log x}}}.
$$
Assuming the generalized Riemann hypothesis, this bound can be improved to 
$$|\pi(x; d, a) - \pi(x; d, b)| = \bigo{ x^{ \frac{1}{2} + \epsilon}}\quad \text{for any }\epsilon > 0.$$

In 2016, Lemke Oliver and \sound{} \cite{LOS16} introduced 
new prime statistics having ``unexpected biases'' which are quantitatively
very large as a function of $x$. 
These statistics concerned the counts up to $x$ for  $r$-tuples 
of $r$ consecutive primes whose  residue classes$\Mod d$
are specified. 
Restricting to $r=2$, let $\pi(x; d, (a, b))$ count the number of 
primes $p_k \leq x$ such that $p_k \equiv a \Mod d$ and $p_{k+1} \equiv b \mod d$.
Here, we follow the standard notation that $p_k$ denotes the $k$\textsuperscript{th} smallest prime. 
We call such functions
`consecutive prime counting functions in arithmetic progressions'. Here, one expects
equidistribution of these counts as $x \to \infty$ in the sense that
$$\pi(x; d, (a, b)) \sim \frac{1}{\totient(d)^2} \frac{x}{\log x} \quad \mbox{as} \quad x \to \infty,$$
although such results remain conjectural. 
Lemke Oliver and \sound{} formulated precise conjectures
on the asymptotic growth of $\pi(x; d, (a ,b))$ which predicts that the size of the bias terms can be 
as large as $x\, \frac{\log \log x}{(\log x)^2}$. 
Their main conjecture implies that  differences of such functions
$$
\pi(x; d, (a_1, b_1)) - \pi(x;d, (a_2, b_2)),
$$
which we may call `consecutive prime number races',
 sometimes observe biases of order $x\,\frac{ \log\log x}{(\log x)^2}$.
Such a large systematic bias of the consecutive prime number races lead to a fixed sign
for all sufficiently large $x$, which implies that one function wins the race for all sufficiently large $x$.

As an example, their main conjecture predicts\footnote{We take $r=2$ and $\frac{1}{8} = \frac{1}{2\totient(5)}$ 
in their Main Conjecture, page. E4447} 
$$
\pi(x; 5, (1,2)) - \pi(x; 5, (1, 1)) = \frac{1}{8}\, x\frac{\log\log x}{(\log x)^2} + \bigo{ \frac{x}{(\log x)^2}},
$$
an assertion implying that this bias will be positive for all large $x$.  This bias term 
 is smaller than the growth rate of
 $\pi(x; d, a)$ by a multiplicative factor $\frac{\log\log x}{\log x}$. 
 
Unlike the functions studied by Lemke Oliver and \sound{} which require two or more arithmetic progression conditions to exhibit bias, the prime running functions can exhibit a large bias even if we only restrict to a single arithmetic progression, as in \eqref{eq:main1}.

\subsection{Contents.} Section \ref{sec:2} defines prime running functions and formulates conjectures regarding the asymptotic behavior of the prime running function. In Section \ref{sec:3}, we present empirical evidence for $d= 3, 4, 5, 7$ and $25$ which provided
 the original basis for
some of the conjectures formulated Section \ref{sec:2}. In Section \ref{sec:cramer}, we formulate probabilistic models for the
primes which may explain the large bias terms.
These probabilistic models are versions of
the Cram\'{e}r model of random primes, modified by first making initial sieving to remove any integers not co-prime to sieve modulus $Q$. 
These models predict that the prime running functions observe a bias of order $x / \log x$ (Theorem~\ref{thm:MC-main} and Theorem~\ref{thm:variance}) and other behaviors (Theorem \ref{thm:anti_symmetry} and Theorem \ref{thm:radical}). These models provide heuristic justification for the conjectures made in Section~\ref{sec:2}.
The proof of Theorem~\ref{thm:MC-main} is found in Section~\ref{sec:cramer_expected_val}.
Section \ref{sec:bias_compute} provides an efficient method for computing the predicted bias computation by the model. The predictions of the \cramer{} model is compared with empirical data.
Section \ref{sec:conclusion} makes concluding remarks on analyzing probabilistic models for prime running functions. 

\section{ Prime Running Functions: Definitions and Conjectures}\label{sec:2}
\subsection{Prime Running Functions}\label{sec:21}

Now we introduce the prime running function. 

\begin{definition} \label{eq:running}
{\em
For $a \Mod d$, we define the \emph{Prime Running Function} as 
$$\Phi(x;d,a) = \sum_{\substack{1\leq n \leq x\\ \floor{n}_\sP \equiv a \Mod d}} 1.$$
Here the $\sP$-floor function $\floor{n}_{\sP}$ gives the 
largest prime less than or equal to $n$. We define $\floor{1}_\sP = 0$.
}
\end{definition}

The prime running function is similar to the prime counting function ``weighted'' by the magnitude of the prime gaps.
\begin{equation}\label{eq:gap_sum}
\Phi(x;d,a) = \sum_{\substack{p_{k+1} \leq x \\p_k \equiv a \Mod d}} \left(p_{k+1} - p_k\right) + e(x;d,a),
\end{equation}
where 
\[e(x;d,a) = 
\begin{cases}
\floor{x} - \floor{x}_\sP + 1 & \text{if } \floor{x}_\sP \equiv a \Mod d\\
0 & \text{otherwise}
\end{cases}
\]
The additional error term $e(x;d,a)$ is bounded by
$$|e(x; d, a) | = \bigo{x^{7/12+\epsilon}} $$
(see Huxley \cite[Chap. 28]{Hux72}).

 The plot of the Prime Run given in Figure~\ref{fig:running} is a plot of two differences
 of prime running functions 
 \begin{equation*}
    \paren{x_n, y_n} = \paren{\Phi(n;5,4) - \Phi(n;5,2), \Phi(n;5,3) - \Phi(n;5,1)}
 \end{equation*}
 for $1 \leq n \leq 10^8$. 

\subsection{Conjectures for Prime Running Functions}\label{sec:23}

It is natural to expect that the values of the prime running function are equidistributed among residue classes with $\gcd(a, d)=1$.

\begin{conj}\label{conj:main_term}
{\rm (Prime Running Function Main Term )}
For any integer $d \geq 2$ and any reduced residue $a \Mod d$,
$$\Phi(x;d,a) \sim \frac{1}{\totient(d)}x \quad as\ x\to\infty$$
\end{conj}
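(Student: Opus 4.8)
I will work throughout with the gap-sum representation \eqref{eq:gap_sum}. Since $e(x;d,a)=\bigo{x^{7/12+\epsilon}}$, it suffices to prove
$$S_a(x) \;:=\!\!\sum_{\substack{p_k\equiv a \Mod d\\ p_{k+1}\le x}}\!\!(p_{k+1}-p_k)\;\sim\;\frac{1}{\totient(d)}\,x.$$
Two elementary reductions come first. Telescoping the gaps gives $\sum_{b \Mod d}S_b(x)=\floor{x}_{\sP}-2 = x+\bigo{x^{0.525}}$ (using $x-\floor{x}_{\sP}\ll x^{0.525}$, Baker--Harman--Pintz), and each residue class $b$ with $\gcd(b,d)>1$ contains at most one prime, namely $\gcd(b,d)$ when that is prime, so $\sum_{\gcd(b,d)>1}S_b(x)=\mathcal{O}_d(1)$. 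Hence $\sum_{\gcd(b,d)=1}S_b(x)=x+\bigo{x^{0.525}}$, and Conjecture~\ref{conj:main_term} is \emph{equivalent} to the assertion that the leading asymptotic coefficient of $S_a(x)$ is the same for every reduced residue $a$ (for $d=2$ there is nothing to prove: $S_1(x)$ is the telescoped sum minus the single gap $3-2$).

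\textbf{Unconditional order of magnitude.} As a warm-up, and the strongest statement I expect to be able to make rigorously, one can show $\Phi(x;d,a)\asymp_d x$ for $\gcd(a,d)=1$. For the lower bound, a standard upper-bound (Selberg) sieve gives $\#\{p\le x:p,\,p+h\text{ both prime}\}\ll \tfrac{x}{(\log x)^2}\mathfrak S(h)$ for even $h$, with $\sum_{h\le G}\mathfrak S(h)\sim G$; therefore the number of $p_k\le x$ with $p_k\equiv a\Mod d$ and $p_{k+1}-p_k\le \delta\log x$ is $\ll_d \delta\,x/\log x$, which for a sufficiently small $\delta=\delta(d)$ is less than $\tfrac12\pi(x;d,a)\sim \tfrac{1}{2\totient(d)}x/\log x$ (prime number theorem in arithmetic progressions). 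So at least half the primes in class $a$ have gap $\gg_d\log x$, giving $S_a(x)\gg_d x$. The upper bound $S_a(x)\le(1-c_d)x+\bigo{x^{0.525}}$ then follows by applying this lower bound to one other reduced class and subtracting from $\sum_{\gcd(b,d)=1}S_b(x)$.

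\textbf{Towards the sharp constant $1/\totient(d)$.} I would split at a slowly growing threshold $T=T(x)$ and treat $p_{k+1}-p_k>T$ and $p_{k+1}-p_k\le T$ separately. The large-gap tail is at most $\tfrac1T\sum_{p_{k+1}\le x}(p_{k+1}-p_k)^2$, which under the Riemann Hypothesis is $\ll \tfrac1T\,x(\log x)^3$ (Selberg's second-moment bound), hence $o(x)$ uniformly over residue classes once $T$ is a fixed power of $\log x$. For the small-gap part one writes
$$\sum_{\substack{p_k\equiv a\Mod d\\ p_{k+1}\le x,\ p_{k+1}-p_k\le T}}(p_{k+1}-p_k)\;=\;\sum_{1\le g\le T} g\cdot\#\{p_k\le x: p_k\equiv a\Mod d,\ p_{k+1}-p_k=g\},$$
and evaluates the inner count by inclusion--exclusion over the ``no prime strictly in between'' conditions, feeding in the Hardy--Littlewood prime $k$-tuples conjecture with a congruence constraint for each tuple that arises --- exactly the mechanism in the work of Lemke Oliver and \sound{}. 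Summing the resulting main terms over $g\le T$, the difference between the totals for two reduced residues $a$ and $b$ is $o(x)$ (conjecturally of size $x\,\tfrac{\log\log x}{\log x}$ or smaller), so $S_a(x)-S_b(x)=o(x)$; combined with the total-sum identity from the first paragraph this forces the common leading coefficient to be $1/\totient(d)$.

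\textbf{Main obstacle.} The small-gap step is where everything is lost: it requires the prime $k$-tuples conjecture with uniformity in the shifts out to $g\asymp(\log x)^{O(1)}$ and with a power-of-$\log$ saving in the error term, so that the sum over $g$ can be controlled --- and already the single count with $g=2$ is the twin-prime problem. Thus Conjecture~\ref{conj:main_term} is genuinely beyond current technology, and the best unconditional statement is the order-of-magnitude bound $\Phi(x;d,a)\asymp_d x$ above. This is precisely why the paper instead proves the analogue of \eqref{eq:main1} in the modified \cramer{} model of Section~\ref{sec:cramer} (Theorem~\ref{thm:MC-main}), where the tuple densities are replaced by genuine independence and each of the steps sketched here goes through.
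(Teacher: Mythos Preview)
This statement is a \emph{conjecture} in the paper, and the paper offers no proof of it. Immediately after stating it the authors write that, aside from the trivial case $d=2$, ``there seem to be no results known to give unconditional asymptotic formulas for functions of this type,'' and that ``there does not even seem to be any lower bounds of the form $\Phi(x;d,a)>cx$ with $c>0$.'' You recognize this correctly: your final paragraph identifies precisely where an attack founders (the small-gap contribution needs uniform Hardy--Littlewood $k$-tuples input, already hopeless for a single shift), and you point to Theorem~\ref{thm:MC-main} as the paper's substitute, which proves the analogous asymptotic only for the expected value in the modified Cram\'er model. There is therefore no proof in the paper to compare your sketch against; your proposal is an honest discussion of the obstructions rather than a claimed proof, and in that respect it matches the paper's own stance.

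One point is worth flagging: your ``warm-up'' bound $\Phi(x;d,a)\asymp_d x$ appears to be correct and, taken at face value, goes beyond what the paper asserts is known. The Selberg-sieve upper bound for prime pairs is uniform in the shift $h\le x^{1-\epsilon}$, and combined with Gallagher's estimate $\sum_{h\le H}\mathfrak S(h)\sim H$ it yields $\#\{p_k\le x:p_{k+1}-p_k\le\delta\log x\}\ll\delta\,x/\log x$ with an absolute implied constant; taking $\delta$ small in terms of $d$ then leaves at least half of the primes $\equiv a\Mod d$ with gap $\gg_d\log x$, and the lower bound $S_a(x)\gg_d x$ follows. The upper bound is immediate from the telescoping identity, as you say. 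So either the authors overlooked this standard sieve argument or meant only that no such result is recorded in the literature for $\Phi$ specifically; in either case, your observation would sharpen the discussion following Conjecture~\ref{conj:main_term} and is worth writing out carefully.
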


Aside from the trivial exception $d=2$, 
there seem to be no results known to give unconditional asymptotic formulas for functions of this type.
Furthermore, there does not even seem to be any lower bounds of the form $\Phi(x; d, a) > c x$ with $c>0$.

Since the average spacing between primes is of order $\log x$, if it were known that the prime gap
size distribution $p_{k+1} - p_k$ was independent of its congruence class $a \,(\bmod \,d)$
of $p_k$ to an error $o(\log x)$ as $x \to \infty$, then Conjecture \ref{conj:main_term} would follow.

The main empirical observation of this paper is the (apparent) existence of large biases in the prime
running function away from the expected main term. We formulate a conjecture
 characterizing the bias of the prime running function between different residues.
 
\begin{conj}\label{conj:running_error}
{\rm (Prime Running Bias Conjecture)}
For any integer $d \geq 2$ and integer $a$ with $\gcd(a, d) = 1$, there exists a constant $R(d;a)$ such that 
$$\Phi(x;d,a) = \frac{1}{\totient(d)}x +R(d;a)\frac{x}{\log x} + o\left(\frac{x}{\log x}\right)$$
\end{conj}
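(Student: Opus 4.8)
Conjecture~\ref{conj:running_error} is beyond the reach of current technology, so I describe the route by which it would follow from a suitably uniform form of the Hardy--Littlewood prime $k$-tuples conjecture, in the spirit of the analysis of consecutive primes by Lemke Oliver and \sound{}, together with an identification of $R(d;a)$ matching the modified \cramer{} model. \textbf{Step 1 (reduce to prime gaps).} By \eqref{eq:gap_sum} and $\abs{e(x;d,a)}=\bigo{x^{7/12+\epsilon}}=o(x/\log x)$ it suffices to prove
\[
  S(x;d,a):=\sum_{\substack{p_{k+1}\le x\\ p_k\equiv a\Mod d}}\paren{p_{k+1}-p_k}=\frac{x}{\totient(d)}+R(d;a)\,\frac{x}{\log x}+o\!\paren{\frac{x}{\log x}}.
\]
Summing over reduced residues telescopes: $\sum_{(a,d)=1}S(x;d,a)=\floor{x}_\sP+\bigo{1}=x+o(x/\log x)$, using the same prime-gap bound $p_{n+1}-p_n=o(x/\log x)$ for $p_n\le x$ (Huxley) that controls $e(x;d,a)$; hence any expansion of this shape is forced to satisfy $\sum_{(a,d)=1}R(d;a)=0$, the anti-symmetry the model predicts.

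\textbf{Step 2 (Hardy--Littlewood input).} Group the primes by the size $h$ of the following gap, $S(x;d,a)=\sum_{h\ge 1}h\,N(x;d,a,h)$, where $N(x;d,a,h)$ counts $p\le x$ with $p\equiv a\Mod d$, $p+h$ prime, and $p+1,\dots,p+h-1$ all composite. Removing the ``no prime in between'' constraint by inclusion--exclusion, exactly as Lemke Oliver and \sound{} do for consecutive primes,
\[
  N(x;d,a,h)=\sum_{T\subseteq\{1,\dots,h-1\}}(-1)^{\abs{T}}\,\pi(x;d,a,\{0,h\}\cup T),
\]
where $\pi(x;d,a,\mathcal H)$ counts $p\le x$ with $p\equiv a\Mod d$ and $p+t$ prime for all $t\in\mathcal H$. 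The Hardy--Littlewood conjecture with the congruence built in predicts $\pi(x;d,a,\mathcal H)=\mathfrak S_{d,a}(\mathcal H)\,\frac{x}{(\log x)^{\abs{\mathcal H}}}+(\text{lower order})$, with an explicit singular series that factors over primes; its factor at a prime $\ell\mid d$ records how many of the residues $\{a+t:t\in\{0\}\cup\mathcal H\}$ are divisible by $\ell$, and this is precisely where the dependence on $a$ enters.

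\textbf{Step 3 (resummation).} Substituting and summing over $h$ and $T$, to leading order the singular-series and inclusion--exclusion data collapse to the ``geometric'' model: the next prime after an $a$-prime near $x$ lies at distance $h$ with probability $\sim\frac{1}{\log x}\,\rho_{d,a}(h)\prod_{0<j<h}\paren{1-\tfrac{\rho_{d,a}(j)}{\log x}}$, where $\rho_{d,a}(j)=\tfrac{d}{\totient(d)}\indf[\gcd(a+j,d)=1]$ up to Hardy--Littlewood corrections at primes $\ell\nmid d$ that average to $1$. Then $\sum_{0<j<h}\rho_{d,a}(j)=h+\delta_{d,a}(h)$ with $\delta_{d,a}(h)=\bigo{d}$ depending only on $a\Mod d$, so $\sum_h h\cdot(\text{prob})=\log x+g(d;a)+o(1)$ for an $a$-dependent constant $g(d;a)$; combining with $\pi(x;d,a)\sim\frac{1}{\totient(d)}\frac{x}{\log x}$ from \eqref{eqn:PNTAP} by partial summation over $2\le t\le x$ gives $S(x;d,a)=\frac{x}{\totient(d)}+\frac{g(d;a)}{\totient(d)}\frac{x}{\log x}+o(x/\log x)$, so $R(d;a)=g(d;a)/\totient(d)$ --- an absolutely convergent expression in the singular-series data, asymmetric in $a$ and summing to $0$. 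This is exactly the computation the paper performs unconditionally for the modified \cramer{} model, with $\rho$ replaced by the indicator of coprimality to the sieve modulus $Q$; the comparison of two geometric series and the evaluation of the model's expectation carried out there are the model-side shadows of this resummation.

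\textbf{Main obstacle.} The decisive difficulty is Steps 2--3: one needs the Hardy--Littlewood asymptotics to hold \emph{uniformly} for tuples $\mathcal H$ of diameter up to $(\log x)^{1+\epsilon}$, with a power-of-$\log$ error saving, and one must justify interchanging the limit $x\to\infty$ with the infinite sums over $h$ and $T$. Uniform prime-tuple estimates of this strength are far out of reach; as the paper observes, for $d\ge 3$ not even the main term $\Phi(x;d,a)\sim x/\totient(d)$ (Conjecture~\ref{conj:main_term}), nor even a lower bound $\Phi(x;d,a)\gg x$, is currently provable. For this reason I would not attempt an unconditional proof of Conjecture~\ref{conj:running_error}; the rigorous content available is the verification that the modified \cramer{} model exhibits an expansion of exactly this form, which is what the paper establishes and which furnishes the predicted $R(d;a)$.
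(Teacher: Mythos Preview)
The statement is a \emph{conjecture}, and the paper does not prove it; it offers numerical evidence (Section~\ref{sec:3}) and then rigorously analyzes a modified \cramer{} model whose expectation has exactly this shape (Theorem~\ref{thm:MC-main}). You correctly recognize this at the outset and at the end of your proposal, so there is no disagreement on the bottom line: no unconditional proof is available, and the rigorous content is the model computation.

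Where your write-up differs from the paper is that you sketch a \emph{conditional} derivation via a uniform Hardy--Littlewood $k$-tuples conjecture, following the Lemke Oliver--\sound{} template. The paper does not carry this out; it explicitly flags such an analysis as future work in Section~\ref{sec:conclusion}. So your Steps~2--3 go beyond the paper's scope. As a heuristic outline this is reasonable, but be aware that even as a conditional argument it is not complete: the passage in Step~3 from the inclusion--exclusion over $T$ to the ``geometric'' model with the single density $\rho_{d,a}$ is a substantial resummation that needs its own justification (the Hardy--Littlewood corrections at primes $\ell\nmid d$ do not literally ``average to $1$'' term-by-term; one has to control the tail and show the secondary singular-series contributions combine to an $O(1)$ constant plus $o(1)$, which is exactly where the Lemke Oliver--\sound{} analysis becomes delicate). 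Your identification of the main obstacle --- the need for uniformity in $\mathcal H$ of diameter $(\log x)^{1+\epsilon}$ with a power-of-log saving, and the interchange of limits --- is on target and matches the paper's own caveats.

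In short: your proposal is not a proof and does not claim to be; the paper likewise offers no proof. Your Hardy--Littlewood sketch is a plausible roadmap the paper alludes to but does not pursue, while the paper's actual contribution toward this conjecture is the unconditional \cramer{}-model computation you reference at the end.
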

The order of magnitude $x / \log x$ for the bias term in conjecture~\ref{conj:running_error} is predicted by a probabilistic model in Section~\ref{sec:cramer}.

Assuming Conjecture~\ref{conj:running_error}, by taking the differences of two prime running functions, we can directly observe the bias term:
$$
\Phi(x;d,a_1) - \Phi(x;d, a_2) = (R(d; a_1) - R(d; a_2)) \frac{x}{\log x} + o\left(\frac{x}{\log x}\right)
$$
In Section \ref{sec:3}, we present empirical estimates of the constants $R(d;a)$
for $d= 3, 5$ and $7$.
We call the constants $R(d; a),$ \emph{bias constants}.

The empirical data and a probability model (see Theorem~\ref{thm:anti_symmetry}) suggest that the following anti-symmetry property of the bias constants may hold.

\begin{conj}\label{conj:odd_running_error}
{\rm (Bias Constant Anti-symmetry Conjecture)}
The bias constants for prime running function for modulus $d$ satisfy
$$R(d;-a) = -R(d;a),$$
when $(a,d) = 1$.
\end{conj}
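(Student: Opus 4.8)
Since Conjecture~\ref{conj:odd_running_error} concerns the true primes — for which even Conjecture~\ref{conj:main_term} is open — the realistic plan is not to prove it outright, but to establish the corresponding \emph{exact} anti-symmetry for the bias constant $R_Q(d;a)$ predicted by the modified \cramer{} model with sieve modulus $Q$ (this is the content of Theorem~\ref{thm:anti_symmetry}); under the working hypothesis behind Conjectures~\ref{conj:main_term}--\ref{conj:running_error}, that the model with $Q$ a large primorial captures the first two terms of $\Phi(x;d,a)$, this is precisely the heuristic content of the conjecture. Assume $d \mid Q$ (the general case is the same after replacing $Q$ throughout by $\operatorname{lcm}(d,Q)$; cf.\ Theorem~\ref{thm:radical}). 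Let $1 = r_1 < r_2 < \cdots < r_{\varphi(Q)} = Q-1$ enumerate the residues coprime to $Q$ in $[1,Q]$, let $g_i = r_{i+1} - r_i$ be the associated cyclic gap pattern (indices taken mod $\varphi(Q)$, so $\sum_i g_i = Q$), and for an integer $m$ set $A_m(a) := \sum_{i:\, r_i \equiv a \,(\mathrm{mod}\ d)} g_{i+m}$, a $\varphi(Q)$-periodic function of $m$. The argument then has three steps.

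\emph{Step 1: an explicit form for $R_Q(d;a)$.} In the model, conditioning a model-prime at an admissible slot $r_i$ on its residue $a \Mod d$, the expected distance to the next model-prime is $\sum_{l \ge 0}(1-\lambda)^{l}\, g_{i+l}$ with $\lambda = c/\log x$; averaging over the $\varphi(Q)/\totient(d)$ admissible slots in class $a$, expanding as $\lambda\to 0$, and integrating in $x$, one obtains — this should be (essentially) the content of Theorem~\ref{thm:MC-main}, proved in Section~\ref{sec:cramer_expected_val}, the geometric-weight approximation carried out there (replacing the exact product weights over consecutive admissible slots by geometric ones) contributing only $o(x/\log x)$ —
\[
  \Phi(x;d,a) \;=\; \frac{x}{\totient(d)} \;+\; R_Q(d;a)\,\frac{x}{\log x} \;+\; o\paren{\frac{x}{\log x}},
  \qquad
  R_Q(d;a) \;=\; \kappa_Q \;-\; \frac{1}{\varphi(Q)^{2}}\,W(a),
\]
where $W(a) := \sum_{m=0}^{\varphi(Q)-1} m\,A_m(a)$ and $\kappa_Q$ is independent of $a$. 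The key structural feature is that all of the residue-dependence of $R_Q(d;a)$ is carried by the single linear functional $W(a)$ of the admissible-gap pattern.

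\emph{Step 2: the reflection $n\mapsto -n$.} The set of residues coprime to $Q$ is fixed by $r\mapsto -r$; this involution sends $r_i\mapsto r_{\varphi(Q)+1-i}$, sends the class $a\Mod d$ to $-a\Mod d$, and — because the totative pattern is a cyclic palindrome — acts on the gap indices by $g_l\mapsto g_{\varphi(Q)-l}$. Feeding this into the definition of $A_m$ (an index $i$ with $r_i\equiv -a$ has reflection $\varphi(Q)+1-i$, for which $r_{\varphi(Q)+1-i}\equiv a$, and $g_{(\varphi(Q)+1-i)+m} = g_{\varphi(Q)-(\varphi(Q)+1-i+m)} = g_{i-1-m}$) yields the identity
\[
  A_m(-a) \;=\; A_{\varphi(Q)-1-m}(a), \qquad 0 \le m \le \varphi(Q)-1 ;
\]
intuitively, reflection carries the forward-gap statistics of class $a$ into the backward-gap statistics of class $-a$, and palindromy converts ``backward'' back into ``forward'' at the cost of the index flip $m\mapsto\varphi(Q)-1-m$. \emph{Step 3: conclusion.} Reindexing the sum defining $W(-a)$ by this identity, $\sum_m m\,A_m(-a) = \sum_m(\varphi(Q)-1-m)\,A_m(a)$, so
\[
  W(a) + W(-a) \;=\; \bigl(\varphi(Q)-1\bigr)\sum_{m=0}^{\varphi(Q)-1} A_m(a) \;=\; \bigl(\varphi(Q)-1\bigr)\,\frac{\varphi(Q)\,Q}{\totient(d)},
\]
the last equality since $\sum_{m} A_m(a) = \sum_{i:\, r_i\equiv a}\sum_{m} g_{i+m} = \tfrac{\varphi(Q)}{\totient(d)}\,Q$. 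The right-hand side is independent of $a$, hence $R_Q(d;a)+R_Q(d;-a)$ equals a constant $K$ independent of $a$; but $\sum_a\Phi(x;d,a) = x + o(x/\log x)$ (the gaps telescope, up to the $O(1)$ primes dividing $d$) forces $\sum_a R_Q(d;a)=0$, and for $d\ge 3$ one has $-a\not\equiv a\Mod d$ for every reduced $a$, so summing $R_Q(d;a)+R_Q(d;-a)=K$ over the $\totient(d)/2$ reflection pairs gives $\tfrac{\totient(d)}{2}\,K=0$, i.e.\ $K=0$ and $R_Q(d;-a)=-R_Q(d;a)$. (For $d=2$ the statement is vacuous, the unique reduced class being self-inverse and bias-free.) Granting that $R(d;a)$ is the large-$Q$ limit of $R_Q(d;a)$, this is Conjecture~\ref{conj:odd_running_error}.

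The whole difficulty is concentrated in that last clause: identifying the empirical bias constant $R(d;a)$ with the large-$Q$ limit of the model constant $R_Q(d;a)$ is exactly the unproven content of Conjectures~\ref{conj:main_term}--\ref{conj:running_error}, and no unconditional control of $\Phi(x;d,a)$ for $d\ge 3$ is currently available. Inside the model the reflection computation of Steps~2--3 is elementary; the real work is Step~1, where one must justify the termwise $x\to\infty$ limit and the geometric-weight approximation — exactly the kind of estimate that will be needed in Section~\ref{sec:cramer_expected_val} — and verify that the term $e(x;d,a)$ of~\eqref{eq:gap_sum} and the variance of the model sum (Theorem~\ref{thm:variance}) are both $o(x/\log x)$.
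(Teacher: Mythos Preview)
Your proposal is correct and lands in the right place: since the conjecture itself is about the actual primes and is not provable with current tools, you prove the model anti-symmetry $R_Q(d;-a)=-R_Q(d;a)$ (the paper's Theorem~\ref{thm:anti_symmetry}) and frame the conjecture as the assertion that the model bias constants agree with the empirical ones. That is exactly the paper's stance.

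Your argument for the model identity, however, differs from the paper's. The paper works directly with the closed form from Theorem~\ref{thm:MC-main},
\[
R^*_Q(d;a)=\frac{1}{\varphi(Q)^2}\sum_{\substack{1\le s,t\le Q\\ (st,Q)=1,\ s\equiv a\,(d)}}[t-s]_Q,
\]
applies the substitution $(s,t)\mapsto(-s,-t)$ to $R^*_Q(d;-a)$, and then uses the elementary identity $[t-s]_Q+[s-t]_Q\in\{Q,2Q\}$ to evaluate $R^*_Q(d;a)+R^*_Q(d;-a)$ \emph{exactly} as $2\bar R_Q(d)$; the anti-symmetry drops out in one line with no auxiliary input. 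You instead reparametrize by the totative gap sequence $(g_i)$, exploit its palindrome symmetry $g_i=g_{\varphi(Q)-i}$ to obtain $A_m(-a)=A_{\varphi(Q)-1-m}(a)$, deduce that $R_Q(d;a)+R_Q(d;-a)$ is constant in $a$, and then pin the constant to zero using $\sum_a R_Q(d;a)=0$ (which you justify by telescoping the model gaps). Both routes are valid and essentially equivalent --- your $W(a)$ is just $\varphi(Q)^2\bigl(Q/\varphi(d)-R^*_Q(d;a)\bigr)$ --- but the paper's is shorter because it computes the constant directly rather than appealing to the zero-sum side condition; on the other hand, your palindrome formulation makes the underlying symmetry of the sieved integers more transparent and would generalize more readily to other gap-weighted statistics.
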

In addition, Conjecture~\ref{conj:running_error} for $d = 3$ implies anti-symmetry $R(3;1) = -R(3;2)$ since $\Phi(x;3,1) + \Phi(x;3,2) = x + \bigo{1}$.

Limited empirical data and a probabilistic model (see Theorem~\ref{thm:radical}) support the conjecture that the bias constants$\Mod d$ depend only on the square-free part $d_{sf}$ of $d$, also called the radical of $d$, see~\cite{diophantine}.
\begin{equation}
    d_{sf} = rad(d) := \prod_{p | d} p
\end{equation}
\begin{conj}\label{conj:radical}{\rm (Radical Equivalence Conjecture)}
For all $d \geq 2$, with $(a , d) = 1$,
\begin{equation}
    R(d;a) = \frac{\totient(d_{sf})}{\totient(d)}R(d_{sf}; a),
\end{equation}
where $d_{sf} = rad(d)$ is the square-free part of $d$. 
\end{conj}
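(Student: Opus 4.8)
The plan is to treat Conjecture~\ref{conj:radical} conditionally, assuming Conjecture~\ref{conj:running_error}, so that the bias constants $R(D;b)$ are well defined for every modulus $D\ge 2$ and every reduced residue $b\bmod D$. The first step is an elementary counting identity comparing $\Phi(\cdot\,;d,\cdot)$ with $\Phi(\cdot\,;d_{sf},\cdot)$. Since $d$ and $d_{sf}=rad(d)$ have exactly the same prime divisors, every prime $p>d$ satisfies $\gcd(p,d)=\gcd(p,d_{sf})=1$; hence for all but $\bigo{1}$ values of $n\le x$ the prime $\floor{n}_\sP$ is coprime to $d$ and lies in a unique reduced class $b\bmod d$, and $\floor{n}_\sP\equiv a\pmod{d_{sf}}$ precisely when $b\equiv a\pmod{d_{sf}}$. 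Partitioning the count of $n\le x$ according to this class gives
$$
\Phi(x;d_{sf},a)=\sum_{\substack{b\bmod d\\ (b,d)=1,\ b\equiv a\,(d_{sf})}}\Phi(x;d,b)+\bigo{1},
$$
where the sum has exactly $\totient(d)/\totient(d_{sf})$ terms.

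Substituting the asymptotic expansion of Conjecture~\ref{conj:running_error} on both sides, the main terms match automatically because $\tfrac{1}{\totient(d_{sf})}=\tfrac{\totient(d)/\totient(d_{sf})}{\totient(d)}$; comparing the coefficients of $\tfrac{x}{\log x}$ then yields the additive relation
$$
R(d_{sf};a)=\sum_{\substack{b\bmod d\\ (b,d)=1,\ b\equiv a\,(d_{sf})}}R(d;b).
$$
This is unconditional once Conjecture~\ref{conj:running_error} is granted, and it reduces Conjecture~\ref{conj:radical} to a single assertion: that $R(d;b)$ depends only on $b\bmod d_{sf}$, i.e.\ that it is constant on each fiber of the reduction map $(\cZ/d\cZ)^\times\to(\cZ/d_{sf}\cZ)^\times$. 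Indeed, granting that, let $\rho(a)$ denote the common value on the fiber over $a$; the additive relation becomes $R(d_{sf};a)=\tfrac{\totient(d)}{\totient(d_{sf})}\rho(a)$, that is $\rho(a)=\tfrac{\totient(d_{sf})}{\totient(d)}R(d_{sf};a)$, which is exactly Conjecture~\ref{conj:radical}.

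The remaining fiber-constancy assertion is the genuinely difficult part, and for it I would appeal to the heuristic picture underlying the paper: the bias $R(d;b)\tfrac{x}{\log x}$ is produced by correlations between a prime gap $p_{k+1}-p_k$ and the residues of $p_k$ modulo the small primes $q$, through the tendency of $p_{k+1}$ to avoid the class of $p_k$ modulo each such $q$. Since $q\mid d\iff q\mid d_{sf}$, the only arithmetic information about $p_k$ entering this mechanism is $p_k\bmod d_{sf}$; the additional information in $p_k\bmod d$ concerns divisibility by higher prime powers $q^{e}$ with $e\ge 2$, to which the prime $p_k\ne q$ is indifferent and equidistributed. I would make this precise inside the modified \cramer{} model, where it is exactly the statement of Theorem~\ref{thm:radical}; that theorem is the heuristic evidence for the fiber-constancy hypothesis, and hence for Conjecture~\ref{conj:radical}. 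The main obstacle is precisely this last step: an unconditional proof of fiber-constancy would demand a form of the Hardy--Littlewood prime $k$-tuple conjecture strong and uniform enough to isolate the secondary-order term in the prime-gap statistics and to establish its insensitivity to $p_k$ modulo prime powers beyond the first, which lies well outside present technology.
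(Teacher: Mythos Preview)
The statement you are addressing is a \emph{conjecture}; the paper does not prove it and does not claim to. What the paper offers as justification is (i) empirical data for $d=4$ and $d=25$ and (ii) Theorem~\ref{thm:radical}, the analogous identity $R_Q(d;a)=\tfrac{\totient(d_{sf})}{\totient(d)}R_Q(d_{sf};a)$ inside the modified \cramer{} model, proved by explicit manipulation of the closed form for $R_Q$. So there is no ``paper's own proof'' to compare against.

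Your write-up is not a proof either, and you are honest about this. Your reduction is correct: the counting identity $\Phi(x;d_{sf},a)=\sum_{b\equiv a\,(d_{sf})}\Phi(x;d,b)+\bigo{1}$ is elementary and, assuming Conjecture~\ref{conj:running_error}, it does yield the additive relation $R(d_{sf};a)=\sum_b R(d;b)$. You then correctly isolate fiber-constancy---$R(d;b)$ depending only on $b\bmod d_{sf}$---as the missing ingredient, and you correctly note that Theorem~\ref{thm:radical} establishes exactly this fiber-constancy for the model constants $R_Q$. In fact the paper's proof of Theorem~\ref{thm:radical} follows the same two-step skeleton: it first shows the additive relation (by the same partition argument, at the level of expectations), then proves fiber-constancy $R_Q(d;a)=R_Q(d;a+Q_{sf})$ directly from the explicit sum defining $R_Q^*$.

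So your proposal is a faithful account of what is known and what is not: the additive half is rigorous conditional on Conjecture~\ref{conj:running_error}, and the fiber-constancy half remains genuinely conjectural for the true bias constants $R(d;a)$, supported only by the \cramer{}-model analogue. There is no gap in your reasoning beyond the one you yourself flag.
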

In particular, $R(d;a) = R(d;a')$ if $a \equiv a' \Mod{d_{sf}}$. For special case $d = 2$, we know unconditionally that $R(2;1) = 0$. Thus conjecture~\ref{conj:radical} predicts that
\begin{equation}
    R(2^j;a) = 0
\end{equation}
for all $j \geq 1$ and $a \equiv 1 \Mod 2$.

\section{Experimental Results}\label{sec:3}
In this section, we present numerical data on the prime running function for a few small
modulus $d$ over their residue classes. In Section~\ref{sec:3_primes}, we provide data for $d = 3, 5$, and $7$. In Section~\ref{sec:3_powers}, we provide data for $d = 4$ and $25$.

\subsection{ Prime Running Function Data for Prime Modulus}\label{sec:3_primes}
We first present data on the prime running functions for prime values of $d$ and compare them to the predicted values from the main term Conjecture~(\ref{conj:main_term}). Table~\ref{tab:r_mod3} and Table~\ref{tab:r_mod5} give numerical data for $d = 3$ and $d = 5$ at $x = 10^8, 10^{10}, 10^{12}$.

\begin{table}[H]
\begin{center}
	\begin{tabular}{|c|c|c|c|} 
		\hline
		\multicolumn{4}{|c|}{$\Phi(x;3,a)$}\\
		\hline
     \diagbox{a}{x} & $x = 10^8$ & $x = 10^{10}$ & $x = 10^{12}$\\
    \hline
		a = 1 & 51209542 & 5091131912 & 507317304782\\
		\hline
		a = 2 & 48790455 & 4908868085 & 492682695215\\
		\hline
		Predicted & 50000000 & 5000000000 & 500000000000\\
		\hline
	\end{tabular}
	\caption{Value of the prime running function $\Phi(x;3,a)$ at different values of $x$ and $a \Mod 3$.}
	\end{center}
\end{table}

\begin{table}[H]
\begin{center}
	\begin{tabular}{|c|c|c|c|} 
		\hline
		\multicolumn{4}{|c|}{$\Phi(x;5,a)$}\\
		\hline
        \diagbox{a}{x} & $x = 10^8$ & $x = 10^{10}$ & $x = 10^{12}$\\
        \hline
		a = 1 & 24644198 & 2470292440 & 247456175258\\
		\hline
		a = 2 & 23714857 & 2401583475 & 241999191675\\
		\hline
		a = 3 & 26085716 & 2588759228 & 257451209200\\
		\hline
		a = 4 & 25555226 & 2539364854 & 253093423864\\
		\hline
		Predicted & 25000000 & 2500000000 & 250000000000\\
		\hline
	\end{tabular}
	\caption{Value of the prime running function $\Phi(x;5,a)$ at different values of $x$ and $a \Mod 5$.}
	\end{center}
\end{table}
This numerical data suggests that the main term is $\frac{1}{\totient(d)} x$ and that systematic bias error terms are present. 
 
The size of the bias appears to be growing more slowly than the main term $\frac{1}{\totient(d)}x$ as $x$ increases in powers of $10$.
 
To fit the data to conjecture ~\ref{conj:running_error}, we introduce a new function.
\begin{definition}\label{def:numeric_bias_const}
For integer $d \geq 2$ and reduced residue $a \Mod d$, we define the \emph{rescaled bias function} $R(x;d,a)$ by
\begin{equation}
R(x;d,a) := \left(\Phi(x;d,a) - \frac{1}{\totient(d)}x\right)\frac{\log x}{x}.
\end{equation}
\end{definition}
Conjecture~\ref{conj:running_error} can now be rewritten in the following form.
\begin{conj}
For all $d \geq 2$, with $\gcd(a,d) = 1$ the following limit exists. 
$$R(d;a) = \lim_{x\to\infty}R(x;d,a).$$
\end{conj}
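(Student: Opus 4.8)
The plan is to produce an honest asymptotic for $\Phi(x;d,a)$; since the displayed statement is a verbatim rephrasing of Conjecture~\ref{conj:running_error}, only such an asymptotic — necessarily conditional on strong prime-tuple hypotheses — can establish that the limit exists. I would start from the gap-sum identity~\eqref{eq:gap_sum}; as $e(x;d,a)=\bigo{x^{7/12+\varepsilon}}=o(x/\log x)$, it suffices to treat $\sum_{p_k\equiv a\,(d),\,p_{k+1}\le x}(p_{k+1}-p_k)$. Regrouping by the \emph{exact} gap length $h=p_{k+1}-p_k$ rewrites this, up to a negligible boundary adjustment, as $\sum_{h\ge 2}h\,N(x;d,a,h)$, where $N(x;d,a,h)$ counts primes $p\le x$ with $p\equiv a\,(d)$ whose successor is $p+h$. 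Since ``$p+h$ is the next prime after $p$'' means ``$p+h$ is prime and $p+1,\dots,p+h-1$ are all composite'', inclusion--exclusion over which of the intermediate values are prime gives
$$N(x;d,a,h)=\sum_{S\subseteq\{1,\dots,h-1\}}(-1)^{|S|}\#\set{p\le x:\ p\equiv a\ (d),\ p+s\text{ prime }\ \forall\, s\in\{0\}\cup S\cup\{h\}}.$$

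The heart of the argument is to resum this inclusion--exclusion. Feeding in the quantitative Hardy--Littlewood $k$-tuples conjecture in arithmetic progressions (\cite{HardyLittlewood}, in the refined shape used by Lemke Oliver and \sound{}~\cite{LOS16}) — each count of primes $p\le x$ with $p\equiv a\,(d)$ and a prescribed admissible shift set $\{0\}\cup S\cup\{h\}$ of size $|S|+2$ all prime is $\mathfrak S_{d,a}(S\cup\{h\})\,\frac{x}{(\log x)^{|S|+2}}$ plus lower-order terms, the singular series $\mathfrak S_{d,a}$ carrying the dependence on $a\ (\bmod\ d)$ through its local factors at primes dividing $d$ — and resumming over $S$ should yield
$$N(x;d,a,h)\ \approx\ \mathfrak S_{d,a}(\{h\})\,\frac{x}{(\log x)^2}\prod_{0<s<h}\paren{1-\frac{\lambda_{d,a}(s)}{\log x}}$$
for suitable local weights $\lambda_{d,a}(s)$. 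This is precisely the shape of the summands in the modified \cramer{} model: the product $\prod(1-\lambda/\log x)$ is the geometric-decay factor appearing in the Geometric Approximation Lemma, and $\sum_{h}h\,N(x;d,a,h)$ then becomes a geometric-type sum which, by the comparison carried out in that lemma and the evaluation in Theorem~\ref{thm:MC-main} (proved in Section~\ref{sec:cramer_expected_val}), should equal $\frac1{\totient(d)}x+R(d;a)\frac{x}{\log x}+o(x/\log x)$, the constant $R(d;a)$ being the explicit series in the singular-series data produced by that evaluation. One truncates the $h$-sum at $h\le H:=(\log x)^{1+\delta}$ and controls the tail $h>H$ by a weighted large-gap bound $\sum_{p_k\le x}(p_{k+1}-p_k)\,\indf[p_{k+1}-p_k>H]=o(x/\log x)$, which follows conditionally from a second-moment estimate $\sum_{p_k\le x}(p_{k+1}-p_k)^2\ll x(\log x)^{O(1)}$ together with \cramer-type tail behaviour.

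The main obstacle is the uniformity demanded in the second paragraph. The Hardy--Littlewood asymptotics must hold uniformly in every shift in $\{0\}\cup S\cup\{h\}$ for $h$ as large as $(\log x)^{1+\delta}$, with an error saving in powers of $\log x$ strong enough to survive both the truncated inclusion--exclusion over $S$ and the summation over the $\asymp(\log x)^{1+\delta}$ relevant values of $h$, so that the accumulated error is still $o(x/\log x)$. No uniform statement of this kind is known even for a single admissible pair, so the argument cannot presently be made unconditional, and even deducing that the limit \emph{merely exists} — say via a Cauchy criterion $R(x_1;d,a)-R(x_2;d,a)\to0$ — appears to require the same inputs. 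This is exactly why the paper substitutes the modified \cramer{} model, where Theorems~\ref{thm:MC-main} and~\ref{thm:variance} carry out the resummation rigorously and thereby furnish both the heuristic justification and the predicted value of $R(d;a)$ in lieu of a proof.
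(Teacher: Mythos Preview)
The displayed statement is a \emph{conjecture}, not a theorem: the paper explicitly labels it as such and offers no proof. It is simply the restatement of Conjecture~\ref{conj:running_error} in terms of the rescaled bias function $R(x;d,a)$ from Definition~\ref{def:numeric_bias_const}, and the paper's surrounding text makes clear that its status is conjectural, supported only by the empirical data of Section~\ref{sec:3} and the probabilistic heuristics of Section~\ref{sec:cramer}. There is therefore no ``paper's own proof'' against which to compare your proposal.

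Your write-up is not a proof either, and to your credit you say so: the argument is conditional on a uniform quantitative Hardy--Littlewood conjecture for tuples of size growing with $x$, with error terms surviving inclusion--exclusion over subsets of $\{1,\dots,h-1\}$ and summation over $h\le(\log x)^{1+\delta}$. That is well beyond anything known, and you correctly flag it as the obstruction. What you have written is a plausible heuristic outline in the spirit of Lemke~Oliver--\sound{}~\cite{LOS16}, which is exactly the direction the paper gestures toward in its concluding remarks (Section~\ref{sec:conclusion}) as ``future work''. But as a proof of the conjecture it has a genuine gap at its core: the required uniform $k$-tuples input is itself an open conjecture, so nothing in the proposal establishes that the limit $\lim_{x\to\infty}R(x;d,a)$ exists.
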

 
 Figure ~\ref{fig:mod3_run_scaled} and Figure \ref{fig:mod5_run_scaled}
 plots the rescaled bias functions for $d = 3$ and $d = 5$ for $x \leq 10^{10}$. The resulting curves appear approximately flat, which supports the conjecture that the prime running functions approach $\frac{1}{\totient(d)}x + R(d; a) \frac{x}{\log x}$,
 where $R(d; a)$ is the bias constant.

\begin{figure}[H]
\centering
 \includegraphics[width=0.7\textwidth]{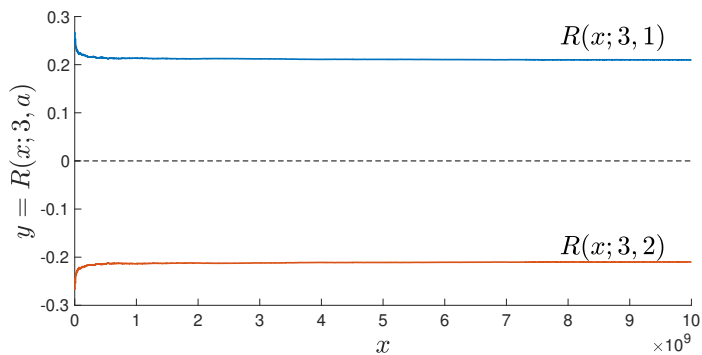}
\caption{Plot of $R(x;3,a)$ 
for all reduced residues $a \Mod 3$ and $x \leq 10^{10}$. The line $y = 0$ is marked with a dashed line.}\label{fig:mod3_run_scaled}
\end{figure}
 
\begin{figure}[H]
\centering
 \includegraphics[width=0.7\textwidth]{1e10_5_log_over_x-eps-converted-to.pdf}
 \caption{Plot of $R(x;5,a)$ 
for all reduced residues $a \Mod 5$ and $x \leq 10^{10}$. The line $y = 0$ is marked with a dashed line.} \label{fig:mod5_run_scaled}
\end{figure}

Tables \ref{tab:r_mod3}, \ref{tab:r_mod5} and \ref{tab:r_mod7} numerically computes the values of $R(x;d,a)$ for moduli $d= 3$, $d=5$ and $d=7$ at $x = 10^8, 10^{10}, 10^{12}$. 

\begin{table}[H]
\begin{center}
	\begin{tabular}{|c|c|c|c|} 
	\hline
	    \multicolumn{4}{|c|}{$R(x;3,a)$}\\
	\hline
        \diagbox{a}{x} & $x = 10^8$ & $x = 10^{10}$ & $x = 10^{12}$\\
    \hline
		1 & 0.2228 & 0.2098 &0.2022\\
	\hline
		2 & -0.2228 & -0.2098 &-0.2022\\
		\hline
	\end{tabular}
	\caption{Values of $R(x;3,a)$ for various values of $x$ and $a$.}
	\label{tab:r_mod3}
\end{center}
\end{table}

\begin{table}[H]
\begin{center}
	\begin{tabular}{|c|c|c|c|} 
	\hline
	    \multicolumn{4}{|c|}{$R(x;5,a)$}\\
	\hline
        \diagbox{a}{x} & $x = 10^8$ & $x = 10^{10}$ & $x = 10^{12}$\\
    \hline
		1 & -0.0655 & -0.0684 & -0.0703\\
	\hline
		2 & -0.2367 & -0.2266 & -0.2211\\
	\hline
        3 & 0.2000 & 0.2044 & 0.2059\\
	\hline
        4 & 0.1023 & 0.0906	& 0.0855\\
	\hline
	\end{tabular}
	\caption{Values of $R(x;5,a)$ for various values of $x$ and $a$.}
	\label{tab:r_mod5}
\end{center}
\end{table}

\begin{table}[H]
\begin{center}
	\begin{tabular}{|c|c|c|c|} 
	\hline
	    \multicolumn{4}{|c|}{$R(x;7,a)$}\\
	\hline
        \diagbox{a}{x} & $x = 10^8$ & $x = 10^{10}$ & $x = 10^{12}$\\
    \hline
		a = 1 & 0.1530 & 0.1501 & 0.1461\\
	\hline
	    a = 2 & -0.0780 & -0.0709 & -0.0680\\
	\hline
        a = 3 & 0.0588 & 0.0527 & 0.0506\\
	\hline
        a = 4 & -0.0681 & -0.0601	& -0.0571\\
	\hline
        a = 5 & 0.0583 & 0.0590	& 0.0626\\
	\hline
        a = 6 & -0.1240 & -0.1308 & -0.1343\\
	\hline
	\end{tabular}
	\caption{Values of $R(x;7,a)$ for various values of $x$ and $a$.}
	\label{tab:r_mod7}
\end{center}
\end{table}
In tables \ref{tab:r_mod3}, \ref{tab:r_mod5}, and \ref{tab:r_mod7}, slow trends are visble, but their directions (increase of decrease in magnitude) seems to vary with $a$. Furthermore, the data are consistent with the anti-symmetry Conjecture~\ref{conj:odd_running_error}.

\subsection{Prime Running Function Data for Prime Power Modulus}\label{sec:3_powers}
Figure~\ref{fig:mod4run_scaled} below plots $R(x;4,a)$ for $x \leq 10^{10}$.
There appears to be a smaller bias for the prime running functions for $1 \, (\bmod \, 4)$ and $3 \, (\bmod \, 4)$.
This is consistent with conjecture~\ref{conj:radical} which would imply that $R(4;1) = R(4;3) = 0$.

\begin{figure}[H]
\centering
\begin{subfigure}[b]{0.8\textwidth}
   \includegraphics[width=1\linewidth]{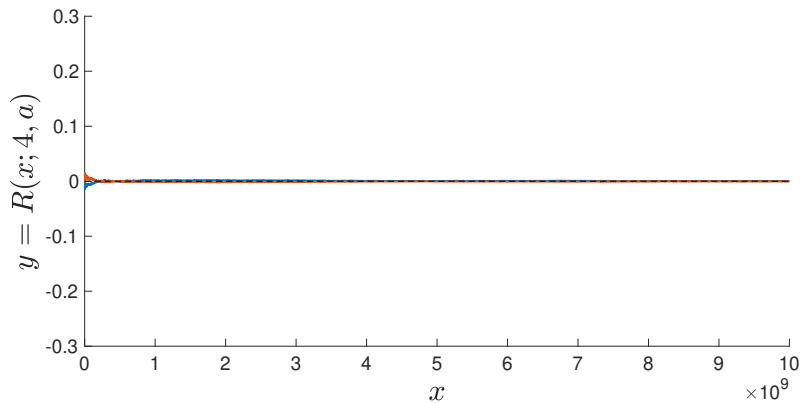}
   \caption{}
   \label{fig:Ng1} 
\end{subfigure}

\begin{subfigure}[b]{0.8\textwidth}
   \includegraphics[width=1\linewidth]{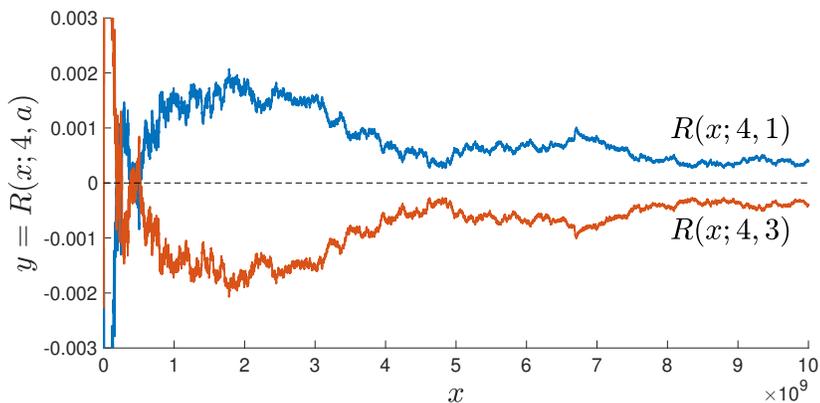}
   \caption{}
   \label{fig:Ng2}
\end{subfigure}

\caption[Two numerical solutions]{(a) Plot of $R(x;4,a)$ 
for all reduced residues $a \Mod 4$ and $x \leq 10^{10}$. The line $y = 0$ is marked with a dashed line. The axis are set to the same scale as Figures \ref{fig:mod3_run_scaled} and \ref{fig:mod5_run_scaled}.
(b) Y-axis is zoomed in by a scale of 100.}
\label{fig:mod4run_scaled}
\end{figure}

Figure~\ref{fig:mod4run_no_scale} presents the unscaled prime running race between $1 \Mod 4$ and $3 \Mod 4$. Chebyshev's bias for$\Mod 4$ is illustrated in Figure~\ref{fig:mod4_count_run_no_scale}. In the depicted domain, the sign of $\Phi(x;4,1) - \Phi(x;4,1)$ is predominantly positive, which is the opposite sign from Chebyshev's bias $\pi(x;4,1) - \pi(x;4,3)$.

We see that unlike prime races between prime moduli, the bias for is of much smaller order (roughly of order $\sqrt{x}$). We observe that for large values of $x$ in the plot, $\Phi(x;4,1) - \Phi(x;4,3) > 0$

\begin{figure}[H]
 \centering \includegraphics[width=0.7\textwidth]{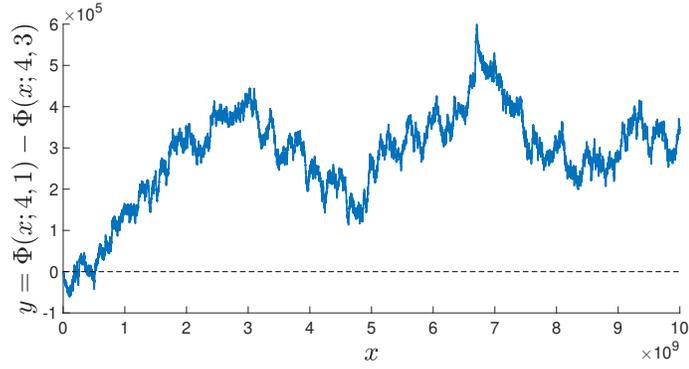}
 \caption{Plot of $\Phi(x;4,1) - \Phi(x;4,3)$ against $x$ for $x \leq 10^{10}$. The line $y = 0$ is marked with a dashed line. }\label{fig:mod4run_no_scale}
\end{figure}

\begin{figure}[H]
 \centering \includegraphics[width=0.7\textwidth]{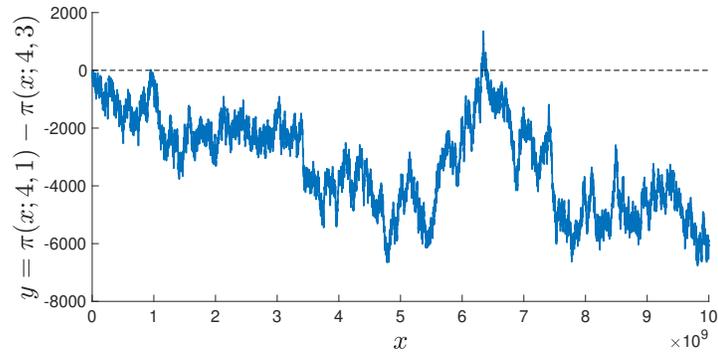}
 \caption{Plot of $\pi(x;4,1) - \pi(x;4,3)$ against $x$ for $x \leq 10^{10}$. The line $y = 0$ is marked with a dashed line. }\label{fig:mod4_count_run_no_scale}
\end{figure}
Now we present data on prime running functions $\Mod{25}$ as evidence for Conjecture~\ref{conj:radical}.

\begin{figure}[H]
\centering
 \includegraphics[width=0.7\textwidth]{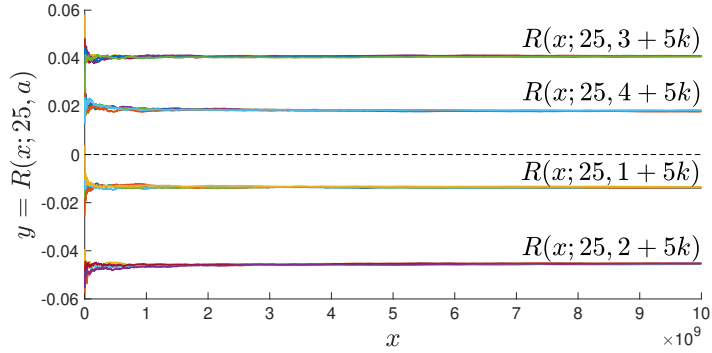}
 \caption{Plot of $R(x;25,a)$ 
for all reduced residues $a \Mod{25}$ and $x \leq 10^{10}$. The line $y = 0$ is marked with a dashed line. In the figure, one can see four ``solid lines''. However, each ``line'' is overlap of plots of $R(x;25,a)$ for five different values of $a$. The top ``line'' is composed of plots of $R(x;25,3)$, $R(x;25,8)$, $R(x;25,13)$, $R(x;25,18)$, and $R(x;25,23)$. The figure is scaled down by a factor of $5$ compared to figures \ref{fig:mod3_run_scaled} and \ref{fig:mod5_run_scaled}} \label{fig:mod25_run_scaled}
\end{figure}

 Table \ref{tab:r_mod4} below numerically computes the values of $R(x;4,a)$.
\begin{table}[H]
\begin{center}
	\begin{tabular}{|c|c|c|c|} 
	\hline
	    \multicolumn{4}{|c|}{$R(x;4,a)$}\\
    \hline
        \diagbox{a}{x} & $x = 10^8$ & $x = 10^{10}$ & $x = 10^{12}$\\
    \hline
	    1 & $-0.0041$ & $0.0004$ & $0.0002$\\
		\hline
		3 & $0.0041$ & $-0.0004$ & $-0.0002$\\
		\hline
	\end{tabular}
	\caption{Values of $R(x;4,a)$ for various values of $x$ and $a$.}
	\label{tab:r_mod4}
\end{center}
\end{table}
 Table \ref{tab:r_mod4} suggests that $R(4, 1)= R(4, 3) = 0$ as predicted by Conjecture \ref{conj:odd_running_error}.

\begin{table}[H]
\begin{center}
	\begin{tabular}{|c|c|c|c|} 
	\hline
	    \multicolumn{4}{|c|}{$R(x;25,a)$}\\
	\hline
        \diagbox{a}{x} & $x = 10^8$ & $x = 10^{10}$ & $x = 10^{12}$\\
    \hline
		a = 1 & $-0.0129$ & $-0.0139$ & $-0.0140$\\
	\hline
		a = 6 & $-0.0131$ & $-0.0136$ & $-0.0141$\\
	\hline
		a = 11 & $-0.0144$ & $-0.0139$ & $-0.0141$\\
	\hline
		a = 16 & $-0.0127$ & $-0.0137$ & $-0.0141$\\
	\hline
		a = 21 & $-0.0125$ & $-0.0134$ & $-0.0140$\\
	\hline
	\end{tabular}
	\caption{Values of $R(x;25,a)$ for various values of $x$ and $a \equiv 1 \Mod 5$.}
	\label{tab:r_mod25}
\end{center}
\end{table}
From Table \ref{tab:r_mod25}, it seems that values of $R(x;25;1 + 5k)$ for $k = 0,1,2,3,4$ become closer as the value of $x$ increases. This behavior is consistent with Conjecture \ref{conj:radical}.

\section{Probabilistic Models for Bias Terms in Prime Running Functions}\label{sec:cramer}
We study probabilistic models for ``random primes'' which can model prime gaps and prime running functions$\Mod d$.
We show that modified Cram\'{e}r models (defined in Section \ref{sec:51}) produce bias terms of order $x / \log x$ associated
with the prime running functions. 

\subsection{Modified \cramer{} Models}\label{sec:51}
The original probabilistic model of Cram\'{e}r (\cite{cr35}, \cite{cr36}) 
picks independently for each integer $n \ge 3$ to be ``$\mc{}$-prime" with
probability $\frac{1}{\log n}$. 
The \cramer{} model seems to accurately predict
many statistics on primes. For example, the \cramer{} model predicts that $|\pi(x) - \Li(x)|$ lies within the predicted range by the Riemann hypothesis.
 However, it does not account for arithmetic restrictions
on prime gaps and primes in arithmetic progressions. For example, almost all sample sequences of $\mc{}$-primes
contain infinitely many gaps of size $1$ between consecutive $\mc{}$-primes and contain infinitely many even numbers as $\mc{}$-primes.

We study a modified version of the \cramer{} model
for the distribution of primes, that imposes initial sieving by an integers $Q \geq 2$ called the \emph{sieve modulus}, followed by a probability model imposed on
the unsieved elements. The initial sieving builds in arithmetic restrictions.
In this model, we let integer $n$ with $\gcd(n, Q) = 1$ be a ``$\mc{Q}$-prime" with probability
 $\frac{\prefactor}{\log n}$ 
where $\prefactor$ is the pre-factor
\begin{equation}\label{eq:def_prefactor}
\prefactor := \frac{Q}{\totient(Q)} =\prod_{p | Q} \left(1+ \frac{1}{p-1}\right).
\end{equation}
The pre-factor $\prefactor$ quantifies the increased chance to be prime after the initial sieving. 
Modifications of \cramer{} models that make such an  initial sieving were suggested in 1995 by 
Granville \cite{granville_cramer95}. They were later studied
 by Pintz \cite{pintz07}.
 
Formally, for fixed integer $Q \geq 2$, we define a sequence of independent Bernoulli random variables $Z_{n,Q}$ by
\begin{equation}\label{eq:rand_var}
\Pr[Z_{n,Q} = 1] = 
\begin{cases}
\frac{\prefactor}{\log n} & gcd(n,Q) = 1\\
0 & gcd(n,Q) \ne 1\\
\end{cases}
\end{equation}
If $\frac{\prefactor}{\log n}$ from \eqref{eq:rand_var} exceeds $1$, 
then we replace it by $1$, a change that affects only finitely many values of $n$.
 If $Z_{n,Q} = 1$ then
 we say that $n$ is a {\em $\mc{Q}$- prime}.

In the modified \cramer{} model, we can define a random variable version
of the prime running functions for these moduli $d$ that divide the sieve modulus $Q$.

\begin{definition}\label{def:cond_gap}
{\em
The {\em conditional gap} $W_{n,Q}$ is a random variable defined as a function of random variables $Z_{n,Q}, Z_{n + 1, Q}, \ldots$
\begin{equation}
W_{n, Q} :=
\begin{cases}
m - n & \mbox{if} \,\, Z_{n,Q} = 1\, \mbox{and} \, Z_{n+1,Q} = Z_{n+2,Q} = \cdots = Z_{m-1,Q} = 0\,\, \mbox{and} \,\, Z_{m,Q} = 1.\\
0 & \text{otherwise}
\end{cases}
\end{equation}

We call $W_{n,Q}$ the {\em conditional gap} because if $n$ is a $\mc{Q}$-prime, then the value of $W_{n,Q}$ will equal the difference between $n$ and the next $\mc{Q}$-prime.
}
\end{definition}

\begin{definition}
(Random Prime Running Function)
{\em Let $Q \geq 2$ be an integer divisible by $d$. 
For fixed $x > 0$, we define the
{\em random prime running function} 
$\Tilde{\Phi}_Q(x;d,a)$ with $Q$ as a sieve modulus is a random variable
\begin{equation}
\label{eq:random-gap}
\Tilde{\Phi}_{Q}(x;d,a) := \sum_{\substack{1 \leq n \leq x\\ n \equiv a \Mod d }} W_{n,Q}.
\end{equation} 
}

\end{definition}

This definition (of a sample sequence) \eqref{eq:random-gap} parallels the definition of prime running function in ~\eqref{eq:gap_sum} in that they both sum over prime gaps (resp. $\mc{Q}$ prime gaps) with smaller prime restricted to an arithmetic progression. 
 
 The function $\Tilde{\Phi}_Q(x; d, a)$
 is of interest when $d$ divides $Q$ and $\gcd(a, d) =1$.


\subsection{Modified \cramer{} Model: Expected Value of the Random Prime Running Function}\label{sec:cramer_expected_val}

We demonstrate that the modified \cramer{} model, on average, predicts that prime running functions have a bias of the order $\frac{x}{\log x}$. 

In what follows, 
\begin{equation}
\gapcut{n}{Q} \equiv n \Mod Q,\, 1 \leq \gapcut{n}{Q} \leq Q.
\end{equation}
So $\gapcut{n}{Q}$ is least positive residue $\Mod Q$

\begin{theorem}\label{thm:MC-main}
Fix an integer $d \geq 2$ and integer $a$ such that $(a,d) = 1$. For the modified \cramer{} model with 
a fixed sieve modulus $Q$ divisible by $d$, one has 
$$
\EE[\Tilde{\Phi}_Q(x;d,a)] = \frac{x}{\totient(d)} + R_{Q}(d;a)\frac{x}{\log x} + \bigo{\frac{x}{(\log x)^2}} \quad \text{ as } x \to \infty
$$
The bias constant $R_Q(d; a)$ is given by 
\begin{equation}\label{eq:def_bias_constant}
R_Q(d;a) = R^*_Q(d;a) - \bar{R}_Q(d),
\end{equation}
where
\begin{equation}
R^*_Q(d;a) := \frac{1}{\totient(Q)^2}\sum_{\substack{s,t = 1\\(st,Q) = 1 \\s \equiv a \Mod d}}^Q \gapcut{t - s}{Q},
\end{equation}
and
\begin{equation}
  \bar{R}_Q(d) = \frac{1}{\totient(d)}\frac{Q}{\totient(Q)}\frac{\totient(Q) + 1}{2}
\end{equation}
\end{theorem}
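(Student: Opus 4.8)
The plan is to expand by linearity of expectation, $\EE[\widetilde{\Phi}_Q(x;d,a)] = \sum_{\substack{1\le n\le x\\ n\equiv a\Mod d}}\EE[W_{n,Q}]$, and to evaluate the single‑site expectation $\EE[W_{n,Q}]$ precisely. Since $W_{n,Q}$ vanishes unless $n$ is a $\mc{Q}$‑prime, $\EE[W_{n,Q}]=0$ whenever $\gcd(n,Q)>1$; and if $u_i$ denotes the $i$\textsuperscript{th} positive integer coprime to $Q$, independence of the $Z_{j,Q}$ yields the exact identity
$$\EE[W_{u_i,Q}] = \frac{\prefactor}{\log u_i}\sum_{k>0}\frac{\prefactor\,(u_{i+k}-u_i)}{\log u_{i+k}}\prod_{0<j<k}\paren{1-\frac{\prefactor}{\log u_{i+j}}} = \frac{\prefactor^{2}}{\log u_i}\,T_2^{1}(i),$$
taking the constant $c=\prefactor$ in the definition of $T_2^1$. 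The first step is to invoke the geometric approximation lemma relating $T_2^m$ and $T_1^m$ to replace $T_2^1(i)$ by $T_1^1(i)$ at the cost of an error $\bigo{\log(i)^{1+\varepsilon}/i}$; since $i\le u_i\ll i$ for the fixed modulus $Q$, after multiplying by $\prefactor^2/\log u_i$ and summing over the (at most $x$) relevant indices this aggregates to $\bigo{(\log x)^{1+\varepsilon}}=\bigo{x/(\log x)^2}$. (The finitely many $n$ with $\prefactor/\log n\ge 1$ and the degenerate term $u_1=1$ contribute $O(1)$ overall and are discarded at the start.)

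The conceptual heart of the argument is the arithmetic periodicity $u_{i+\totient(Q)}=u_i+Q$, valid for every $i$ because each window of length $Q$ contains exactly $\totient(Q)$ integers coprime to $Q$. It forces $u_{i+k}-u_i$ to depend only on $k$ and on the residue $s:=\gapcut{u_i}{Q}$; writing $u_{i+k}-u_i=g_s(k)$, one has $g_s(k)=\prefactor\,k+\psi_s(k)$ with $\psi_s$ periodic of period $\totient(Q)$ in $k$ and bounded by $\bigo{Q}$ (since $\prefactor\,\totient(Q)=Q$). Substituting into $T_1^1(i)=(\log u_i)^{-1}\sum_{k>0}g_s(k)\,\theta^{\,k-1}$ with $\theta=1-\prefactor/\log u_i$ splits the series into the exactly summable linear piece $\sum_{k>0}\prefactor\,k\,\theta^{k-1}=\prefactor(1-\theta)^{-2}$ and a ``local'' piece $\sum_{k>0}\psi_s(k)\theta^{k-1}=\paren{1-\theta^{\totient(Q)}}^{-1}\sum_{r=1}^{\totient(Q)}\psi_s(r)\theta^{r-1}$; inserting $1-\theta^{\totient(Q)}=\totient(Q)\prefactor/\log u_i+\bigo{(\log u_i)^{-2}}$ gives
$$\EE[W_{u_i,Q}] = \prefactor + \frac{\prefactor}{\totient(Q)\,\log u_i}\,\Psi_s + \bigo{\frac{1}{(\log u_i)^2}}, \qquad \Psi_s := \sum_{r=1}^{\totient(Q)}\psi_s(r).$$

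Next I would sum over the $u_i\le x$ with $u_i\equiv a\Mod d$, grouping by $s=\gapcut{u_i}{Q}$, which ranges over the $\totient(Q)/\totient(d)$ values with $1\le s\le Q$, $(s,Q)=1$, $s\equiv a\Mod d$. The constant term contributes $\tfrac{\totient(Q)}{\totient(d)}\cdot\prefactor\cdot\tfrac{x}{Q}+O(1)=\tfrac{x}{\totient(d)}+O(1)$; the $1/\log u_i$ term, via the partial‑summation estimate $\sum_{\substack{n\le x\\ n\equiv s\Mod Q}}(\log n)^{-1}=\tfrac{x}{Q\log x}+\bigo{x/(\log x)^2}$, contributes $\tfrac{1}{\totient(Q)^2}\paren{\sum_s\Psi_s}\tfrac{x}{\log x}+\bigo{x/(\log x)^2}$; and the $\bigo{(\log u_i)^{-2}}$ terms sum to $\bigo{x/(\log x)^2}$. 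It then remains to identify the constant: for $r=1,\dots,\totient(Q)$ one checks $g_s(r)=\gapcut{t_r-s}{Q}$, where $t_r:=\gapcut{u_{i+r}}{Q}$ runs over all residues coprime to $Q$ (using $0<g_s(r)<Q$ for $r<\totient(Q)$ and $g_s(\totient(Q))=Q=\gapcut{0}{Q}$), whence $\Psi_s=\sum_{\substack{1\le t\le Q\\(t,Q)=1}}\gapcut{t-s}{Q}-\prefactor\cdot\tfrac{\totient(Q)(\totient(Q)+1)}{2}$; summing over $s$ and dividing by $\totient(Q)^2$ reproduces exactly $R^*_Q(d;a)-\bar R_Q(d)$, completing the proof.

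The main obstacle I anticipate is not a single deep step but the error bookkeeping: the quantity $\log u_i$ varies slowly and must be carried faithfully through the geometric‑series evaluation and through all three approximations — the $T_2\to T_1$ replacement, the truncation of the expansion of the local piece, and the passage from discrete sums over $u_i$ in a fixed residue class to smooth integrals — with the accumulated error verified in each case to be genuinely $\bigo{x/(\log x)^2}$, so that the true $x/\log x$ bias is isolated rather than swamped. A secondary delicate point is the closed‑form evaluation of $\sum_s\Psi_s$, which hinges on the convention $1\le\gapcut{\cdot}{Q}\le Q$ so that the single ``wrap‑around'' contribution $g_s(\totient(Q))=Q$ is accounted correctly.
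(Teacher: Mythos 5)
Your proposal is correct and follows essentially the same route as the paper's proof: linearity of expectation, the geometric approximation lemma with $c=\prefactor$ and $m=1$, exploitation of the periodicity $u_{i+\totient(Q)}=u_i+Q$ together with the bijection of $\set{u_{i+1},\dots,u_{i+\totient(Q)}}$ onto $\paren{\cZ/Q\cZ}^\times$ (with the wrap-around value $Q=\gapcut{0}{Q}$), and a final summation over $u_i\le x$ in residue classes mod $Q$. Your decomposition of $u_{i+k}-u_i$ into the linear part $\prefactor k$ plus a $\totient(Q)$-periodic part is only a mild reorganization of the paper's evaluation via geometric-distribution moments and Taylor expansions of $\alpha_i^k$ and $(1-\alpha_i^{\totient(Q)})^{-1}$, and it arrives at the same per-site expansion $\EE[W_{u_i,Q}]=\prefactor+\frac{\prefactor}{\totient(Q)\log u_i}\Psi_s+\bigo{(\log u_i)^{-2}}$ and the same constant $R^*_Q(d;a)-\bar R_Q(d)$.
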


\begin{proof}
First, we recall the definition of the prime running function for a sample of the Modified \cramer{} model,
as a function of its random variables $Z_{i,Q}$. It is 
\begin{equation}\label{eq:random_gap_sum}
  \Tilde{\Phi}_Q(x;d,a) = \sum_{\substack{n \leq x\\ n \equiv a \Mod d}} W_{n,Q}
\end{equation}
By linearity of expected values, it is sufficient to analyze the behavior of expected value of the conditional gaps $W_{n,Q}$ (definition~\ref{def:cond_gap}). By definition of expected value over a discrete space,
\begin{equation}\label{eq:expected_def}
  \EE[W_{n,Q}] = \sum_{v}v\Pr[W_{n,Q} = v].
\end{equation}

The values $v$ in \eqref{eq:expected_def} range over values of $W_{n,Q}$, which are the differences between two consecutive $\mc{Q}$-primes. 
Since only positive integers co-prime to $Q$ have a non-zero probability of being $\mc{Q}$-prime, it is helpful to introduce a notation for the unsieved integers. 
Let $U_Q$ be the set of the unsieved positive integers. i.e.
$$U_Q = \{1 = u_1 < u_2 < u_3,\ldots\} := \{u \in \cN \ |\ \gcd(u,Q) = 1\}.$$
Since the random variables $\{Z_{k,Q}\}_{k=1}^\infty$ are independent, for $u_{i+l} > u_i$ we recover that
\begin{equation}\label{eq:gap_prob}
  \Pr[W_{u_i,Q} = u_{i + l} - u_i] = \frac{\prefactor}{\log u_i}\frac{\prefactor}{\log u_{i+l}} \prod_{0 < j < l}\left(1 - \frac{\prefactor}{\log u_{i + j}}\right),
\end{equation}
where $\prefactor = \frac{Q}{\totient(Q)}$ as defined in \eqref{eq:def_prefactor}.

By substituting \eqref{eq:gap_prob} into right hand side of \eqref{eq:expected_def}, we conclude that
\begin{equation}\label{eq:ex_val}
  \EE[W_{u_i,Q}] = \frac{\prefactor^2}{\log u_i}\sum_{\substack{l > 0}} \left[\frac{u_{i+l} - u_i}{\log u_{i+l}} \prod_{0 < j < l}\left(1 - \frac{\prefactor}{\log u_{i+j}}\right)\right].
\end{equation}
While \eqref{eq:ex_val} gives us the exact value, it is difficult to work with. We proceed to approximating the expected value of $W_{u_i, Q}$ to a more convenient form.
\begin{lemma}\label{lemma:geo_simplify}
\GeoApproxLemma
\end{lemma}
The proof of Lemma \ref{lemma:geo_simplify} is postponed to the appendix.

By substituting $c = \prefactor$ and $m = 1$ into Lemma~\ref{lemma:geo_simplify}, we obtain
\begin{equation*}
\EE[W_{u_i,Q}] = \frac{\prefactor^2}{\log u_i}\sum_{l > 0}\left[\frac{u_{i + l} - u_i}{\log u_i}\left(1 - \frac{\prefactor}{\log u_i}\right)^{l-1}\right] + \bigo{\frac{(\log u_i)^\varepsilon}{u_i}}
\end{equation*} for any fixed $\varepsilon > 0$
as $u_i$ tends to infinity.

To further simplify Lemma~\ref{lemma:geo_simplify}, we separate $u_{i+l}$ into individual residue classes$\Mod Q$.
\begin{equation*}
  \EE[W_{u_i,Q}] = \frac{\prefactor^2}{(\log u_i)^2}\sum_{h = 1}^{\totient(Q)}\sum_{l \geq 0}(u_{i + \totient(Q)l + h} - u_i)\left(1 - \frac{\prefactor}{\log u_i}\right)^{\totient(Q)l + h - 1} + \bigo{\frac{(\log u_i)^\varepsilon}{u_i}}
\end{equation*}
Now let $\alpha_i = 1 - \frac{\prefactor}{\log u_i}$ and obtain
\begin{equation}\label{eq:expansion}
  \EE[W_{u_i,Q}] = \frac{\prefactor^2}{(\log u_i)^2}\sum_{h = 1}^{\totient(Q)}\alpha_i^{h - 1}\sum_{l \geq 0}(u_{i + h} - u_i + Q l)\alpha_i^{\totient(Q)l} + \bigo{\frac{(\log u_i)^\varepsilon}{u_i}}
.\end{equation}
We utilize moments of a geometric distributed random variable $Y_p$ with parameter $p \in (0,1]$.
\begin{equation}\label{eq:zero_moment}
    \EE[Y_p^0] = \sum_{h = 1}^\infty p (1 - p)^{h - 1} = 1  
\end{equation}
\begin{equation}\label{eq:first_moment}
    \EE[Y_p] = \sum_{h = 1}^\infty h p (1 - p)^{h - 1} = \frac{1}{p}  
\end{equation}
\begin{equation}\label{eq:second_moment}
    \EE[Y_p^2] = \sum_{h = 1}^\infty h^2 p (1 - p)^{h - 1} = \frac{2 - p}{p^2}
\end{equation}
More specifically consider $Y_{1 - \alpha_i^{\totient(Q)}}$.
Substituting the definition of moments to \eqref{eq:expansion}, we obtain
\begin{equation}\label{eq:expansion_geo}
\EE[W_{u_i,Q}] = \frac{\prefactor^2}{(\log u_i)^2}\sum_{h = 1}^{\totient(Q)}\paren{\alpha_i^{h - 1} \frac{u_{i + h} - u_i}{1 - \alpha_i^{\totient(Q)}}\EE\left[Y_{1 - \alpha_i^{\totient(Q)}}^0\right] + \frac{Q \alpha_i^{\totient(Q)}}{1 - \alpha_i^{\totient(Q)}}\EE\left[Y_{1 - \alpha_i^{\totient(Q)}}\right]} + \bigo{\frac{(\log u_i)^\varepsilon}{u_i}}
.    
\end{equation}

By substituting \eqref{eq:zero_moment} and \eqref{eq:first_moment} into right hand side of \eqref{eq:expansion_geo}, we obtain that
\begin{equation}\label{eq:pre_expand}
  \EE[W_{u_i,Q}] = \frac{\prefactor^2}{(\log u_i)^2}\sum_{h = 1}^{\totient(Q)}\alpha_i^{h - 1}\left[\frac{u_{i +h} - u_i}{1 - \alpha_i^{\totient(Q)}} + \frac{Q\alpha_i^{\totient(Q)}}{(1 - \alpha_i^{\totient(Q)})^2}\right] + \bigo{\frac{(\log u_i)^\varepsilon}{u_i}}
\end{equation}

We further simplify~\eqref{eq:pre_expand} using the following series expansions.
\begin{equation}\label{eq:top_expansion}
  \alpha_i^{k} = \paren{\frac{\log u_i - \prefactor}{\log u_i}}^k = 1 - \frac{k \prefactor}{\log u_i} + \frac{k(k - 1)\prefactor^2}{2 (\log u_i)^2} + \bigo{(\log u_i)^{-3}}
\end{equation}
\begin{equation}\label{eq:mid_expansion}
  \frac{1}{1 - \alpha_i^{\totient(Q)}} = \frac{\log u_i}{Q}\left(1 - \frac{(\totient(Q) - 1)\prefactor}{2 \log u_i} +  \bigo{\frac{1}{(\log u_i)^2}}\right)^{-1} = \frac{\log u_i}{Q} + \frac{\totient(Q) - 1}{2 \totient(Q)} + \bigo{(\log u_i)^{-1}}
\end{equation}
\begin{equation}\label{eq:bottom_expansion}
  \frac{\alpha_i^{\totient(Q)}}{\left(1 - \alpha_i^{\totient(Q)}\right)^2} = \frac{1}{\paren{1 - \alpha_i^{\totient(Q)}}^2} - \frac{1}{1 - \alpha_i^{\totient(Q)}} =  \frac{(\log u_i)^2}{Q^2} -\frac{1}{Q\totient(Q)}\log u_i + \bigo{1}
\end{equation}

By substituting the series expansions~\eqref{eq:top_expansion}, \eqref{eq:mid_expansion}, and \eqref{eq:bottom_expansion} into the
right hand side of \eqref{eq:pre_expand}, we obtain the following equation\footnote{For fixed $Q$, we only sum over finite number of terms in \eqref{eq:pre_expand}. Thus the constants for the Big-O type bounds are bounded.}.

\begin{eqnarray}\label{eq:sub_expansion}
  \EE[W_{u_i,Q}] = \frac{\prefactor^2}{(\log u_i)^2}\sum_{h = 1}^{\totient(Q)}\Bigg[\paren{1 - \frac{(h-1)\prefactor}{\log u_i} + \bigo{\frac{1}{(\log u_i)^2}}}\nonumber\\
  \times \paren{\frac{(\log u_i)^2}{Q} +\left(\frac{u_{i+l} - u_i}{Q} - \frac{1}{\totient(Q)}\right)\log u_i + \bigo{1}}\Bigg] + \bigo{\frac{(\log u_i)^\varepsilon}{u_i}}
\end{eqnarray}
\eqref{eq:sub_expansion} simplifies to the following.
\begin{equation}\label{eq:lambda_complexity}
  \EE[W_{u_i,Q}] = \prefactor + \frac{\prefactor^2}{\log u_i}\sum_{h = 1}^{\totient(Q)}\left(\frac{u_{i+h} - u_i}{Q} - \frac{h}{\totient(Q)}\right) + \bigo{\frac{1}{(\log u_i)^2}}
\end{equation}

Note that the projection of $\set{u_{i+h} : h = 1,2,\ldots, \totient(Q)}$ to $\paren{\cZ/Q\cZ}^\times$ is a bijection. Also note that $1 \leq u_{i+h} - u_i \leq Q$ for $1 \leq h \leq \totient(Q)$. Thus if $u_i \equiv s \Mod Q$, then
\begin{equation}
    \sum_{h = 1}^{\totient(Q)}\left(\frac{u_{i+h} - u_i}{Q} - \frac{h}{\totient(Q)}\right) = -\frac{\totient(Q) + 1}{2} + \frac{1}{Q}\sum_{\substack{1 \leq t \leq Q, (t,Q) = 1}} \gapcut{t - s}Q.
\end{equation}

Summing these contributions in \eqref{eq:lambda_complexity} yields
\begin{equation}\label{eq:partial_sum}
\sum_{\substack{u_i \leq x\\ u_i \equiv s \Mod Q}}\EE[W_{u_i,Q}] =
  \frac{x}{\totient(Q)} + \frac{Q}{\totient(Q)^2}\left(-\frac{\totient(Q) + 1}{2} + \frac{1}{Q}\sum_{\substack{1 \leq t \leq Q \\ (t, Q) = 1}} \gapcut{t - s}{Q}\right)\frac{x}{\log x} + \bigo{\frac{x}{(\log x)^2}}
\end{equation}
Finally, summing \eqref{eq:partial_sum} over all $s \equiv a \Mod d$ for $s = 1, 2, \ldots Q$ that are co-prime to $Q$, we get
\begin{equation*}
\EE[\Phi(x;q,a)] = \sum_{\substack{u_i \leq x\\ u_i \equiv a \Mod d}}\EE[W_{u_i,Q}] =
  \frac{x}{\totient(d)} + \paren{R^*_Q(d;a) - \frac{1}{\totient(d)}\frac{Q}{\totient(Q)}\frac{\totient(Q) + 1}{2}}\frac{x}{\log x} + \bigo{\frac{x}{(\log x)^2}}
\end{equation*}
\end{proof}

\subsection{Modified \cramer{} Model: Variance of the Random Prime Running Function}
The next theorem shows that the probability distribution is centered around the mean value with a standard deviation of scale at most $\sqrt{x \log x}$. Note that the standard deviation is significantly smaller than the order of bias $\frac{x}{\log x}$.

\begin{theorem}\label{thm:variance}
Fix an integer $d \geq 2$ and an integer $a$ such that $(a,d) = 1$. Then
\begin{equation}
    \var(\Tilde{\Phi}_Q(x;d,a)) = \bigo{x \log x}
.\end{equation}
\end{theorem}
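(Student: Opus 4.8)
The strategy is to bound the variance of $\Tilde{\Phi}_Q(x;d,a) = \sum_{n \le x,\, n \equiv a (d)} W_{n,Q}$ using the bilinear expansion
$$
\var\paren{\sum_{\substack{n \le x\\ n \equiv a (d)}} W_{n,Q}} = \sum_{\substack{m,n \le x\\ m \equiv n \equiv a (d)}} \cov(W_{m,Q}, W_{n,Q}),
$$
and to show each covariance decays fast enough in $|m-n|$ that the double sum is $\bigo{x \log x}$. The key structural fact is that $W_{m,Q}$ and $W_{n,Q}$ are functions of disjoint (hence independent) blocks of the $Z_{\cdot,Q}$'s on most of the probability space: if $m < n$, then the event $\{W_{m,Q} \ne 0\}$ forces $Z_{m,Q}=1$ and the next $\mc{Q}$-prime after $m$ to be some $m' $, and on the sub-event $\{m' \le n\}$ the variables $W_{m,Q}$ and $W_{n,Q}$ depend on disjoint index sets, so they are conditionally independent there. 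The only contribution to the covariance comes from the "bad" event $B_{m,n} = \{Z_{m,Q}=1 \text{ and } Z_{j,Q}=0 \text{ for all } m < j < n \text{ with } \gcd(j,Q)=1\}$, i.e. the event that there is no $\mc{Q}$-prime strictly between $m$ and $n$. On $B_{m,n}$, $W_{m,Q}$ "reaches past" $n$ and can correlate with $W_{n,Q}$ (in fact on $B_{m,n}$ we have $W_{n,Q}=0$ unless $Z_{n,Q}=1$, but even then $n$ isn't a $\mc Q$-prime-start of a fresh gap... one has to be careful: actually on $B_{m,n}$, $n$ cannot be a $\mc Q$-prime reached "freshly"—let me just say the correlation is supported on $B_{m,n}$).

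\textbf{Key steps.} First I would establish the bound $|\cov(W_{m,Q}, W_{n,Q})| \le \EE[|W_{m,Q}| \cdot |W_{n,Q}| \cdot \indf_{B_{m,n}}] + \EE[|W_{m,Q}|\indf_{B_{m,n}}]\EE[|W_{n,Q}|] + \ldots$, and the main point is $\Pr[B_{m,n}] \le \prod_{m < j < n,\, \gcd(j,Q)=1}(1 - \frac{\prefactor}{\log j}) \le \exp\paren{-\prefactor \sum_{m<j<n,(j,Q)=1} \frac{1}{\log j}} \ll \exp\paren{-c' \frac{n-m}{\log n}}$ for $n - m \le n$, which is exponentially small in $(n-m)/\log n$. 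Second, I need tail bounds on the conditional gaps: $\EE[W_{m,Q}^k \mid W_{m,Q}>0]$ and the truncated moments on $B_{m,n}$; since $W_{m,Q}$ has geometric-type tails with mean $\asymp \log m$, the relevant moments are $\bigo{(\log m)^{O(1)}}$, and crucially $\EE[W_{m,Q}\,\indf_{B_{m,n}}]$ is small because $B_{m,n}$ forces $W_{m,Q} \ge n - m$, so this is a tail expectation $\ll (n-m+\log n)\exp(-c'(n-m)/\log n)$. Third, I would plug these into the double sum: the diagonal-ish terms $|m - n| \le C \log x \log\log x$ contribute at most $\bigo{\log x}$ per value of $m$ from $\EE[W_{m,Q}^2] = \bigo{(\log x)^2}$ — wait, that gives $\bigo{x \log x \log\log x}$ naively; one recovers $\bigo{x \log x}$ by noting that for a fixed $m$, $\sum_n \cov(W_{m,Q},W_{n,Q})$ telescopes or is controlled because $\sum_n W_{n,Q}$ restricted near $m$ is itself $O(\log x)$ in expectation-of-square sense. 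Actually the clean way: $\var(\sum W_{n,Q}) \le \EE[(\sum_{n \le x} W_{n,Q})^2 \cdot (\text{local structure})]$, but the sharpest route is to use that $\sum_{n \le x, n \equiv a(d)} W_{n,Q} \le \floor x$ deterministically is too weak; instead bound $\cov \le \EE[W_m W_n \indf_{B_{m,n}}]$ directly and sum, using that for fixed $m$, $\sum_{n > m} \EE[W_m W_n \indf_{B_{m,n}}] \ll \EE[W_m^2 \indf_{\{W_m \text{ large}\}}] + \ldots \ll \log x$ after summing the exponential tails.

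\textbf{Main obstacle.} The delicate point is getting the right power of $\log x$: a crude bound $|\cov| \le \sqrt{\var(W_m)\var(W_n)} = \bigo{(\log x)^2}$ over all $|m-n| \ll \log x$ pairs gives $\bigo{x(\log x)^3}$, far too weak; one genuinely needs the exponential suppression from $\Pr[B_{m,n}]$ together with the fact that non-trivial correlation requires $W_{m,Q} > n - m$, so that the off-diagonal sum telescopes against the total length $x$. I expect the cleanest formulation is: $\sum_{\substack{m,n \le x \\ m \equiv n \equiv a(d)}} |\cov(W_{m,Q},W_{n,Q})| \ll \sum_{m \le x} \EE\big[W_{m,Q}\big(W_{m,Q} + \log x\big)\big] \cdot (\text{normalization})$, where the inner factor is $\bigo{(\log x)^2}$, \emph{but} only a $\bigo{1/\log x}$ fraction of $m$'s contribute at each scale — more precisely, grouping by the value $v = W_{m,Q}$, the number of $n \in (m, m+v]$ in the progression is $\le v/d$, giving $\sum_m \EE[W_{m,Q}^2]/d \ll \frac{x}{\log x} \cdot (\log x)^2 = x \log x$. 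This last bookkeeping — that the covariance sum is essentially $\sum_m \EE[W_{m,Q}^2]/d$ because $W_{m,Q}$ can only "overlap" $n$'s within its own reach, and $\EE[W_{m,Q}^2] = \bigo{(\log x)^2}$ while there are $\bigo{x/\log x}$ relevant starting points — is the crux, and I would make it rigorous by the disjoint-blocks/$B_{m,n}$ dichotomy above, with Lemma~\ref{lemma:geo_simplify} (applied with $m=2$) supplying the second-moment estimate $\EE[W_{u_i,Q}^2] = \bigo{(\log u_i)^2}$.
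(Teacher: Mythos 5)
Your proposal follows essentially the same route as the paper's proof: the paper also expands the variance into diagonal plus covariance terms, conditions each covariance on whether the gap starting at the smaller point falls short of, exactly reaches, or overshoots the larger point (your event $B_{m,n}$ is exactly the union of the latter two cases), uses conditional independence on the "falls short" event, the geometric factor $\paren{1-\prefactor/\log u_j}^{j-i}$ for the suppression, and Lemma~\ref{lemma:geo_simplify} with $m=2$ for the second moments. One small tightening versus your bookkeeping: the \emph{unconditional} bound from the lemma is $\EE[W_{u_i,Q}^2]=\bigo{\log u_i}$ (the $\prefactor/\log u_i$ probability that $u_i$ is a $\mc{Q}$-prime start is already built in), so the diagonal and covariance sums can be summed directly over all $u_i\le x$ to give $\bigo{x\log x}$ without the separate "only $x/\log x$ starting points" count, which as written mixes conditional moments with an unconditional sum.
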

\begin{proof}
As with Theorem~\ref{thm:MC-main}, let $U_Q$ be the set of the unsieved integers and let $\prefactor$ be the prefactor. i.e.
$$U_Q = \{1 = u_1 < u_2 < u_3,\ldots\} := \{u \in \cN \ |\ \gcd(u,Q) = 1\}$$
and $\prefactor = \frac{Q}{\phi(Q)}$.

We first utilize the variance of sum of random variables formula.
\begin{equation*}
    \var(\Tilde{\Phi}_Q(x;q,a)) = \sum_{\substack{u_i \leq x \\ u_i \equiv a \Mod d}} \var(W_{u_i,Q}) + 2\sum_{\substack{u_i < u_j \leq x\\ u_i \equiv u_j \equiv a \Mod d}}\cov(W_{u_i,Q}, W_{u_j,Q})
\end{equation*}
We will bound the variance and the co-variance of $W_{u_i,Q}$ separately.

By definition of variance,
\begin{equation*}
    \var(W_{u_i,Q}) = \EE[W_{u_i, Q}^2] - \EE[W_{u_i, Q}]^2 \leq \EE[W_{u_i,Q}^2].
\end{equation*}
It follows from \eqref{eq:gap_prob} that
\begin{equation}\label{eq:var_bound}
    \EE[W_{u_i,Q}^2] = \frac{\prefactor^2}{\log u_i} \sum_{l > 0}\left[\frac{\paren{u_{i + l} - u_i}^2}{\log u_{i + l}}\prod_{0 < j < l}\paren{1 - \frac{\prefactor}{\log u_{i + j}}}\right]
.\end{equation}
By Lemma~\ref{lemma:geo_simplify} and the inequality $u_{i + l} - u_i \leq Q l$, \eqref{eq:var_bound} simplifies to
\begin{equation}\label{eq:var_bound_simplify}
    \var(W_{u_i,Q}) \leq \frac{\prefactor^2Q^2}{\log u_i} \sum_{l > 0}\left[\frac{l^2}{\log u_{i}}\paren{1 - \frac{\prefactor}{\log u_i }}^{l - 1}\right] + \bigo{\frac{(\log u_i)^4}{u_i}}
.\end{equation}
Letting $Y_p$ be a geometrically distributed random variable with parameter $p = \frac{\prefactor}{\log u_i}$. By substituting equation for $\EE[Y_p^2]$ into \eqref{eq:var_bound_simplify}, we obtain that
\begin{equation}
    \var(W_{u_i,Q}) \leq \frac{\prefactor Q^2}{\log u_i} \EE[Y_p^2] + \bigo{\frac{(\log u_i)^4}{u_i}}
.\end{equation}

By second moment of geometric distribution \eqref{eq:second_moment}, we obtain that
\begin{equation}
    \var(W_{u_i,Q}) = \bigo{\log u_i}
.\end{equation}
Thus there exists a constant $C > 0$ such that $\var(W_{u_i,Q}) \leq C \log (u_i)$ for sufficiently large $u_i$. We obtain that
\begin{equation}
    \sum_{\substack{u_i \leq x\\ u_i \equiv a \Mod d}}\var(W_{u_i,Q}) \leq C x\log x + \bigo{1}
\end{equation}

We now bound the covariance terms. Suppose that $i < j$. We will split the covariance into parts by conditioning on different events.
\begin{equation}
    \cov(W_{u_i,Q}, W_{u_j,Q}) = \sum_{k = 1}^3
    \EE[W_{u_i, Q}W_{u_j, Q} | E_k]\PP(E_k) - \EE[W_{u_i, Q}]\EE[W_{u_j, Q}]
\end{equation}
where $E_1, E_2, E_3$ are events $W_{u_i, Q} = u_j - u_i$, $W_{u_i, Q} < u_j - u_i$, $W_{u_i, Q} > u_j - u_i$ respectively.
Suppose $W_{u_i, Q} > u_j - u_i$ ($E_3$). Then $u_j$ cannot be $\mc{Q}$- prime. Such event implies that $W_{u_j, Q} = 0$. Thus $\EE[W_{u_i, Q}W_{u_j, Q} | E_3] = 0$.
Now suppose that $W_{u_i, Q} < u_j - u_i$ ($E_2$). Such event implies that $W_{u_i,Q}$ and $W_{u_j,Q}$ are (conditionally) independent. Thus $\EE[W_{u_i, Q}W_{u_j, Q} | E_2] = \EE[W_{u_i, Q}|E_2]\EE[W_{u_j, Q}] \le \EE[W_{u_i, Q}]\EE[W_{u_j,Q}]$.
By combining these two observations, we conclude that
\begin{equation*}
    \cov(W_{u_i,Q}, W_{u_j,Q}) \leq \PP(W_{u_i,Q} = u_j - u_i)\EE(W_{u_i,Q} W_{u_j,Q} | W_{u_i,Q} = u_j - u_i)
,\end{equation*}
which simplifies to
\begin{equation*}
    \cov(W_{u_i,Q}, W_{u_j,Q}) \leq \PP(W_{u_i,Q} = u_j - u_i)](u_j - u_i)\frac{\log u_j}{\prefactor} \EE[W_{u_j,Q}]
.\end{equation*}

By \eqref{eq:lambda_complexity}, $\EE[W_{u_j,Q}] = \bigo{1}$ and by \eqref{eq:gap_prob}, $\PP(W_{u_i,Q} = u_j - u_i) = \bigo{\frac{1}{(\log u_i)^2}\paren{1 - \frac{\prefactor}{\log u_j} }^{j-i}}$. Thus there exists a constant $A > 0$ such that for sufficiently large $u_i$ and $u_j$,
\begin{equation}\label{eq:cov_basic_bound}
    \cov(W_{u_i,Q}, W_{u_j,Q}) \leq A\frac{\log u_j}{(1 + \log u_i)^2}(u_j - u_i)\paren{1 - \frac{\prefactor}{\log u_j}}^{j - i}
\end{equation}
Note that instead of $(\log u_i)^2$ as the denominator in equation \ref{eq:cov_basic_bound}, we have $(1 + \log u_i)^2$. This allows us to avoid dividing by $0$ when $u_i = 1$. This inconvenience occurs because the probability that $u_i$ is $\mc{Q}$ prime is equal to $\frac{\prefactor}{\log u_i}$ only if $u_i$ is sufficiently large. By summing equation \ref{eq:cov_basic_bound} over different values of $u_i, u_j$, we obtain
\begin{equation}\label{eq:cov_sum}
    \sum_{\substack{u_i < u_j \leq x\\ u_i \equiv u_j \equiv a \Mod d}}\cov(W_{u_i,Q}, W_{u_j,Q}) \leq A \sum_{\substack{u_i < u_j \leq x \\ u_i \equiv u_j \equiv a \Mod d}} \frac{\log u_j}{(1 + \log u_i)^2} (u_j - u_i) \paren{1 - \frac{\prefactor}{\log u_j}}^{j - i} + \bigo{1}
.\end{equation}
By utilizing \eqref{eq:unseive_gap_bound}, $\log u_j \leq \log x$, $u_j - u_i \leq Q(j - i)$, and adding additional non-negative terms, \eqref{eq:cov_sum} simplifies to
\begin{equation}\label{eq:cov_sum_simp}
    \sum_{\substack{u_i < u_j \leq x\\ u_i \equiv u_j \equiv a \Mod d}}\cov(W_{u_i,Q}, W_{u_j,Q}) \leq A Q \sum_{u_i \leq x} \frac{\log x}{(1 + \log u_i)^2} \sum_{h = 1}^\infty h \paren{1 - \frac{\prefactor}{\log x}}^h + \bigo{1}
.\end{equation}
By setting $Y_{\frac{\prefactor}{\log x}}$ to be a geometric random variable with parameter $p = \frac{\prefactor}{\log x}$ and substituting the definition of $\EE[Y_{\frac{\prefactor}{\log x}}]$ into \eqref{eq:cov_sum_simp}, we obtain that
\begin{equation}
    \sum_{\substack{u_i < u_j \leq x\\ u_i \equiv u_j \equiv a \Mod d}}\cov(W_{u_i,Q}, W_{u_j,Q}) \leq A Q \sum_{u_i \leq x} \frac{\log x}{(1 + \log u_i)^2} \frac{\log x}{\prefactor} \EE\left[Y_{\frac{\prefactor}{\log x}}\right] + \bigo{1}
.\end{equation}
By \eqref{eq:first_moment} and the fact $\sum_{n \leq x} \frac{1}{(1 + \log n)^2} = \bigo{\frac{x}{(\log x)^2}}$, we obtain
\begin{equation}
    \sum_{\substack{u_i < u_j \leq x \\ u_i \equiv u_j \equiv a \Mod d}}\cov(W_{u_i,Q}, W_{u_j,Q}) \leq \bigo{x \log x}
.\end{equation}

\end{proof}

\subsection{Modified \cramer{} Model: Anti-symmetry Properties}
Theorem~\ref{thm:anti_symmetry} suggests that the bias constant anti-symmetry Conjecture (\ref{conj:odd_running_error}) should be true.

\begin{theorem}\label{thm:anti_symmetry}
For any integer $d \geq 2$ and integer $Q$ divisible by $d$, the following anti-symmetry holds.
$$R_Q(d;-a) = - R_Q(d;a)$$
\end{theorem}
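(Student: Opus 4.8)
The plan is to reduce everything to the explicit formula for $R_Q(d;a)$ furnished by Theorem~\ref{thm:MC-main}. Recall that
$$R_Q(d;a) = R^*_Q(d;a) - \bar R_Q(d), \qquad \bar R_Q(d) = \frac{1}{\totient(d)}\frac{Q}{\totient(Q)}\frac{\totient(Q)+1}{2},$$
where $\bar R_Q(d)$ does \emph{not} depend on $a$. Hence it suffices to prove the identity
$$R^*_Q(d;a) + R^*_Q(d;-a) = 2\,\bar R_Q(d) = \frac{Q\,(\totient(Q)+1)}{\totient(d)\,\totient(Q)},$$
since then $R_Q(d;a)+R_Q(d;-a) = \bigl(R^*_Q(d;a)+R^*_Q(d;-a)\bigr) - 2\bar R_Q(d) = 0$.

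First I would rewrite $R^*_Q(d;-a)$ via the substitution $s\mapsto -s$, $t\mapsto -t$ (reduced to the range $1,\dots,Q$), which is a bijection of $(\cZ/Q\cZ)^\times$ and sends the congruence condition $s\equiv -a \Mod d$ to $s\equiv a\Mod d$ while turning $\gapcut{t-s}{Q}$ into $\gapcut{s-t}{Q}$. This gives
$$R^*_Q(d;a) + R^*_Q(d;-a) = \frac{1}{\totient(Q)^2}\sum_{\substack{1\le s,t\le Q,\ (st,Q)=1\\ s\equiv a \Mod d}}\Bigl(\gapcut{t-s}{Q} + \gapcut{s-t}{Q}\Bigr).$$
The key elementary fact is that for the least-positive-residue function one has $\gapcut{n}{Q}+\gapcut{-n}{Q} = Q$ when $Q\nmid n$, and $=2Q$ when $Q\mid n$. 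In our sum the only index with $t\equiv s\Mod Q$ (inside the range) is $t=s$, so for each fixed admissible $s$ the inner sum over the $\totient(Q)$ values of $t$ contributes $2Q$ once and $Q$ on the remaining $\totient(Q)-1$ terms, for a total of $Q(\totient(Q)+1)$, independent of $s$. Since $d\mid Q$, the number of admissible $s$ (coprime to $Q$, $\equiv a\Mod d$) is exactly $\totient(Q)/\totient(d)$. Multiplying out yields $\frac{1}{\totient(Q)^2}\cdot\frac{\totient(Q)}{\totient(d)}\cdot Q(\totient(Q)+1) = \frac{Q(\totient(Q)+1)}{\totient(d)\totient(Q)} = 2\bar R_Q(d)$, as required.

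I expect the only subtle point to be the bookkeeping of the diagonal term: one must not conflate the off-diagonal reflection identity $\gapcut{n}{Q}+\gapcut{-n}{Q}=Q$ with the diagonal value $\gapcut{0}{Q}=Q$ (so $\gapcut{0}{Q}+\gapcut{0}{Q}=2Q$), and one must check that within the summation range $1\le s,t\le Q$ the relation $t\equiv s\Mod Q$ forces $t=s$ so that exactly one diagonal term occurs for each $s$. The verification that $s\mapsto -s$ genuinely permutes the reduced residues and respects the condition $s\equiv\pm a\Mod d$ (using $d\mid Q$) is routine, as is the count $\totient(Q)/\totient(d)$ of reduced residues in a fixed class $\Mod d$.
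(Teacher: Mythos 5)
Your proposal is correct and follows essentially the same route as the paper's proof: the substitution $s\mapsto -s$, $t\mapsto -t$ reducing to the identity $\gapcut{t-s}{Q}+\gapcut{s-t}{Q}=Q$ off the diagonal and $2Q$ on it, with the same diagonal bookkeeping and the same final count giving $R^*_Q(d;a)+R^*_Q(d;-a)=2\bar R_Q(d)$. The only cosmetic difference is that you organize the count per fixed $s$ (using $\totient(Q)/\totient(d)$ admissible $s$), whereas the paper counts all pairs at once; the arithmetic agrees.
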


\begin{proof}
Fix an integer $a$ co-prime to $Q$.
By definition of bias constants,
\begin{equation*}
    R_Q^*(d;-a) = \frac{1}{\totient(Q)^2}\sum_{\substack{s,t = 1\\ s \equiv -a \Mod d\\ (st, Q) = 1}}^Q\gapcut{t - s}{Q}
.\end{equation*}
Because $t \to -t$ is a permutation of $\paren{\cZ/Q\cZ}^\times$, we can sum over $-t$. Furthermore, $s \equiv a \Mod d$ implies $-s \equiv -a \Mod d$. Thus
\begin{equation*}
    R_Q^*(d;-a) = \frac{1}{\totient(Q)^2}\sum_{\substack{s,t = 1\\ s \equiv a \Mod d\\ (st, Q) = 1}}^Q\gapcut{s - t}{Q}
.\end{equation*}
It follows that
\begin{equation}\label{eq:anti_sum}
    R_Q^*(d;a) + R_Q^*(d;-a) = \frac{1}{\totient(Q)^2}\sum_{\substack{s,t = 1\\ s \equiv a \Mod d\\ (st, Q) = 1}}^Q\paren{\gapcut{t - s}{Q} + \gapcut{s - t}{Q}}
.\end{equation}
Note that $\gapcut{t - s}{Q} + \gapcut{s - t}{Q} \equiv 0 \Mod Q$ and $\gapcut{t - s}{Q} + \gapcut{s - t}{Q} \in [2, 2Q]$. Thus
\begin{equation}\label{eq:anti_sum_summand}
    \gapcut{t - s}{Q} + \gapcut{s - t}{Q} = 
    \begin{cases}
        Q & t \not\equiv s \Mod Q\\
        2Q & t \equiv s \Mod Q\\
    \end{cases}
.\end{equation}
By counting the number of times $s = t$ in \eqref{eq:anti_sum} and utilizing \eqref{eq:anti_sum_summand}, we obtain that
\begin{equation}
    R_Q^*(d;a) + R_Q^*(d;-a) = \frac{1}{\totient(Q)^2}\paren{Q\frac{\totient(Q)^2}{\totient(d)} + Q\frac{\totient(Q)}{\totient(d)}} = 2\bar{R}_Q(d)
.\end{equation}
\end{proof}
As an immediate corollary, we obtain that the bias constants add up to $0$.
\begin{corollary}\label{cor:zero_sum}
For $d \geq 2$ and $Q$ divisible by $d$,
\begin{equation}
    \sum_{\substack{a = 1 \\ (a, d) = 1}}^d R_Q(d;a) = 0
\end{equation}
and
\begin{equation}
    \bar{R}_Q(d) = \frac{1}{\totient(d)}\sum_{\substack{a = 1 \\ (a,d) = 1}}^d R_Q^*(d;a)
\end{equation}
\end{corollary}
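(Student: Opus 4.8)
The plan is to derive both displayed identities from Theorem~\ref{thm:anti_symmetry} together with the decomposition $R_Q(d;a) = R^*_Q(d;a) - \bar{R}_Q(d)$ supplied by Theorem~\ref{thm:MC-main}. The key observation is that the two identities are essentially equivalent: summing $R_Q(d;a) = R^*_Q(d;a) - \bar{R}_Q(d)$ over all reduced residues $a \Mod d$ gives
$$\sum_{\substack{a=1\\(a,d)=1}}^d R_Q(d;a) = \sum_{\substack{a=1\\(a,d)=1}}^d R^*_Q(d;a) - \totient(d)\,\bar{R}_Q(d),$$
so the first identity holds if and only if the second does. It therefore suffices to establish $\sum_{a} R^*_Q(d;a) = \totient(d)\,\bar{R}_Q(d)$.

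To prove this, I would sum the relation $R^*_Q(d;a) + R^*_Q(d;-a) = 2\bar{R}_Q(d)$ obtained in the proof of Theorem~\ref{thm:anti_symmetry} over all reduced residues $a \Mod d$. Since $a \mapsto -a$ is a bijection of $(\cZ/d\cZ)^\times$, the sums $\sum_a R^*_Q(d;a)$ and $\sum_a R^*_Q(d;-a)$ agree, so the left-hand side collapses to $2\sum_a R^*_Q(d;a)$, while the right-hand side equals $2\totient(d)\,\bar{R}_Q(d)$. Dividing by $2$ yields the claim, and the first identity follows at once. I prefer this formulation because the bijectivity of $a \mapsto -a$ makes the argument uniform: no separate treatment is needed for the fixed-point case $-1 \equiv 1 \Mod d$ (that is, $d = 2$), which would otherwise require a small parity discussion for $\totient(d)$.

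As a consistency check one can also compute $\sum_a R^*_Q(d;a)$ directly. The condition ``$s \equiv a \Mod d$ for some $a$ with $(a,d)=1$'' is equivalent to $(s,d)=1$, which is automatic once $(s,Q)=1$ since $d \mid Q$; hence $\sum_a R^*_Q(d;a) = \totient(Q)^{-2}\sum_{\substack{1\le s,t\le Q\\(st,Q)=1}}\gapcut{t-s}{Q}$. Symmetrizing in $s$ and $t$ via $\gapcut{t-s}{Q} + \gapcut{s-t}{Q} = Q$ when $t \not\equiv s \Mod Q$ and $= 2Q$ when $t \equiv s \Mod Q$, and noting there are $\totient(Q)$ diagonal pairs, gives $\sum_{(st,Q)=1}\gapcut{t-s}{Q} = \tfrac12 Q\,\totient(Q)\,(\totient(Q)+1)$, so $\sum_a R^*_Q(d;a) = \tfrac{Q(\totient(Q)+1)}{2\totient(Q)} = \totient(d)\,\bar{R}_Q(d)$, which matches the definition of $\bar{R}_Q(d)$ in Theorem~\ref{thm:MC-main}.

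Since the corollary is a short formal consequence of Theorem~\ref{thm:anti_symmetry}, I do not anticipate a genuine obstacle; the only point needing slight care is not to drop the fixed-point case, which is precisely why the bijection argument is cleaner than pairing $a$ with $-a$ term by term.
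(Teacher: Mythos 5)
Your argument is correct and matches the paper's intent: the paper presents this as an immediate consequence of Theorem~\ref{thm:anti_symmetry}, and summing the relation $R^*_Q(d;a)+R^*_Q(d;-a)=2\bar{R}_Q(d)$ over the reduced residues via the bijection $a\mapsto -a$, then using the decomposition $R_Q(d;a)=R^*_Q(d;a)-\bar{R}_Q(d)$, is exactly that derivation. Your direct computation of $\sum_a R^*_Q(d;a)$ is a valid consistency check but not needed.
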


\subsection{Modified \cramer{} Model: Radical Equivalence Property}
\begin{theorem}\label{thm:radical}
    For all $d \geq 2$ with $(a,d) = 1$ and $d | Q$,
    \begin{equation}\label{eq:radical_dependence}
        R_Q(d;a) = \frac{\totient(d_{sf})}{\totient(d)}R_Q(d_{sf}, a),
    \end{equation}
    where $d_{sf} = rad(d)$ is the maximal square-free divisor of $d$.
    Equivalently,
    \begin{equation}\label{eq:radical_equiv}
        R_Q(d;a) = R_Q(d; a')
    \end{equation}
    if $a \equiv a' \Mod{d_{sf}}$.
\end{theorem}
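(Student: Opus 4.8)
The plan is to reduce the identity to the corresponding scaling relations for the two ingredients $R_Q^*(d;a)$ and $\bar R_Q(d)$ of $R_Q(d;a)=R_Q^*(d;a)-\bar R_Q(d)$. Since $\bar R_Q(d)=\frac{1}{\totient(d)}\frac{Q}{\totient(Q)}\frac{\totient(Q)+1}{2}$ depends on $d$ only through the factor $\frac{1}{\totient(d)}$, and since $d_{sf}\mid d\mid Q$ (so $R_Q^*(d_{sf};\cdot)$ and $\bar R_Q(d_{sf})$ are defined), one has $\bar R_Q(d)=\frac{\totient(d_{sf})}{\totient(d)}\bar R_Q(d_{sf})$ for free. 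Thus it suffices to prove
$$R_Q^*(d;a)=\frac{\totient(d_{sf})}{\totient(d)}\,R_Q^*(d_{sf};a),$$
since then $R_Q(d;a)=R_Q^*(d;a)-\bar R_Q(d)=\frac{\totient(d_{sf})}{\totient(d)}\bigl(R_Q^*(d_{sf};a)-\bar R_Q(d_{sf})\bigr)=\frac{\totient(d_{sf})}{\totient(d)}R_Q(d_{sf};a)$, which gives \eqref{eq:radical_dependence}; and because $R_Q(d_{sf};a)$ depends on $a$ only through $a\Mod{d_{sf}}$, the ``equivalently'' form \eqref{eq:radical_equiv} follows as well.

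The heart of the argument is the claim that $R_Q^*(d;a)=R_Q^*(d;a')$ whenever $(aa',d)=1$ and $a\equiv a'\Mod{d_{sf}}$, which I would prove by a simultaneous translation of both summation variables. Since $d\mid Q$, every prime dividing $d$ divides $Q$, so $\gcd(\mathrm{rad}(Q),d)=\mathrm{rad}(d)=d_{sf}$. As $d_{sf}\mid(a'-a)$, the Chinese Remainder Theorem yields an integer $\delta$ with $\delta\equiv 0\Mod{\mathrm{rad}(Q)}$ and $\delta\equiv a'-a\Mod d$. The congruence $\mathrm{rad}(Q)\mid\delta$ is exactly the condition guaranteeing that the translation $x\mapsto x+\delta$ permutes $(\cZ/Q\cZ)^\times$ (coprimality to $Q$ depends only on the residue mod $\mathrm{rad}(Q)$), and by construction it carries the class $s\equiv a\Mod d$ onto the class $s\equiv a'\Mod d$. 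Applying the substitution $s\mapsto s+\delta$ in the defining sum for $R_Q^*(d;a')$, then the substitution $t\mapsto t+\delta$, and using $\gapcut{(t+\delta)-(s+\delta)}{Q}=\gapcut{t-s}{Q}$, recovers the defining sum for $R_Q^*(d;a)$.

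To conclude, I would combine this with a counting identity. The reduction map $(\cZ/d\cZ)^\times\to(\cZ/d_{sf}\cZ)^\times$ is surjective with fibers of size $\totient(d)/\totient(d_{sf})$, so there are $\totient(d)/\totient(d_{sf})$ residues $a\Mod d$ with $(a,d)=1$ lying above a given $b\Mod{d_{sf}}$ with $(b,d_{sf})=1$. Since any $s$ with $(s,Q)=1$ satisfies $(s,d)=1$, the sets $\{s:(s,Q)=1,\ s\equiv a\Mod d\}$ over these $a$ partition $\{s:(s,Q)=1,\ s\equiv b\Mod{d_{sf}}\}$, whence $R_Q^*(d_{sf};b)=\sum_{a}R_Q^*(d;a)$ over that fiber. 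By the previous paragraph every summand equals $R_Q^*(d;b)$, so $R_Q^*(d_{sf};b)=\frac{\totient(d)}{\totient(d_{sf})}R_Q^*(d;b)$, which is the desired relation.

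The one point needing care is the existence and correct choice of the shift $\delta$: one must verify that ``translation by $\delta$ preserves coprimality to $Q$'' is equivalent to $\mathrm{rad}(Q)\mid\delta$, and that the congruence system defining $\delta$ is solvable precisely because the hypothesis $a\equiv a'\Mod{d_{sf}}$ matches the identity $\gcd(\mathrm{rad}(Q),d)=d_{sf}$ (which in turn uses $d\mid Q$). The remaining steps are routine manipulations with $\totient$ and counting of residue classes.
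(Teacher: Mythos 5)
Your proposal is correct and takes essentially the same route as the paper: the heart of both arguments is that shifting both summation variables by a multiple of $\mathrm{rad}(Q)$ preserves coprimality to $Q$ and the value of $\gapcut{t-s}{Q}$ while moving $s$ between residue classes mod $d$ that agree mod $d_{sf}$, after which summing over the fiber of $(\cZ/d\cZ)^\times \to (\cZ/d_{sf}\cZ)^\times$ produces the factor $\totient(d_{sf})/\totient(d)$. The differences are cosmetic: you construct a single shift $\delta$ by the (generalized) Chinese Remainder Theorem where the paper shifts by $\mathrm{rad}(Q)$ an appropriate number of times, and you carry out the fiber summation directly on the explicit sums $R_Q^*$ together with the trivial scaling of $\bar R_Q(d)$, whereas the paper deduces the same aggregation identity from $\Tilde{\Phi}_Q(d_{sf};a)=\sum_{a'\equiv a \Mod{d_{sf}}}\Tilde{\Phi}_Q(d;a')$ and Theorem \ref{thm:MC-main}.
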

\begin{proof}
Note that for any fixed sample sequence of $\mc{Q}$ primes,
\begin{equation}
    \Tilde{\Phi}_Q(d_{sf};a) = \sum_{\substack{a' = 1\\ a' \equiv a \Mod{d_{sf}}}}^d \Tilde{\Phi}_Q(d;a').
\end{equation}
By linearity of expected value and inspecting the $\frac{x}{\log x}$ order term from Theorem \ref{thm:MC-main}, we obtain that
\begin{equation}
    R_Q(d_{sf};a) = \sum_{\substack{a' = 1\\ a' \equiv a \Mod{d_{sf}}}}^d R_Q(d;a')
.\end{equation}
Thus \eqref{eq:radical_equiv} implies \eqref{eq:radical_dependence}. Fix $a$ and $a'$ such that $a \equiv a' \Mod{d_{sf}}$.
Let $Q_{sf} = rad(Q)$ denote the square-free part of $Q$. Because $d_{sf}$ divides $Q_{sf}$ and $Q_{sf} / d_{sf}$ is co-prime to $d$, there exists some integer $k$ such that $a + kQ_{sf} = a' \Mod d$. Thus it suffices to show that $R_Q(d;a) = R_Q(d;a + Q_{sf})$ for any fixed $a$. By definition of bias constants given in Theorem~\ref{thm:MC-main},
\begin{equation*}
    R_Q(d;a + Q_{sf}) = -\bar{R}_Q(d) +  \frac{1}{\totient(Q)^2}\sum_{\substack{s,t = 1\\ (st, Q) = 1\\ s \equiv a + Q_{sf}\Mod d}}^Q\gapcut{t - s}{Q}
\end{equation*}
Since $\gapcut{t - s}{Q}$ only depends on value of $t - s \Mod Q$, we can sum over $s \equiv a \Mod d$ and then add $Q_{sf}$ to $s$.
\begin{equation*}
    R_Q(d;a + Q_{sf}) = -\bar{R}_Q(d) +  \frac{1}{\totient(Q)^2}\sum_{\substack{s,t = 1\\ (st, Q) = 1\\ s \equiv a \Mod d}}^Q\gapcut{t - (s + Q_{sf})}{Q}
\end{equation*}
\begin{equation*}
    R_Q(d;a + Q_{sf}) = -\bar{R}_Q(d) +  \frac{1}{\totient(Q)^2}\sum_{\substack{s,t = 1\\ (st, Q) = 1\\ s \equiv a \Mod d}}^Q\gapcut{(t - Q_{sf}) - s}{Q}
\end{equation*}
\begin{equation*}
    R_Q(d;a + Q_{sf}) = -\bar{R}_Q(d) +  \frac{1}{\totient(Q)^2}\sum_{\substack{s,t = 1\\ (s(t + Q_{sf}), Q) = 1\\ s \equiv a \Mod d}}^Q\gapcut{t - s}{Q}
\end{equation*}
Well, for any integer $t$, and any prime factor $p$ of $Q$, $t \equiv t + Q_{sf} \Mod p$. Thus $t$ is co-prime to $Q$ if and only if $t + Q_{sf}$ is co-prime to $Q$. It follows that
\begin{equation}\label{eq:square_free_part_final_eq}
    R_Q(d;a + Q_{sf}) = -\bar{R}_Q(d) +  \frac{1}{\totient(Q)^2}\sum_{\substack{s,t = 1\\ (st, Q) = 1\\ s \equiv a \Mod d}}^Q\gapcut{t - s}{Q}.
\end{equation}
We are done because \eqref{eq:square_free_part_final_eq} is the definition of $R_Q(d;a)$.
\end{proof}

\section{Computation For Modified \cramer{} Model} \label{sec:bias_compute}
In this section, we compute the bias constants $R_Q(d;a)$ for the modified \cramer{} model for various values of $Q$ and $d$.

\subsection{Recursive Formula for Bias Constants}
Brute force computation of bias constant $R_Q(d;a)$ has runtime complexity that is polynomial in $Q$, which  is
 exponential  in input bit size $O( \log Q)$. 
The following result gives a recursive formula yielding an improved method for computing the bias constants $R_Q(d;a)$ for fixed $d$ and all $a \Mod d$ with $(a,d) = 1$.
\begin{theorem}\label{thm:recursion}
Suppose $d, p, Q_0 \geq 2$ are pairwise co-prime and $p$ is a prime. Let $Q = dQ_0$. Then
\begin{equation}\label{eq:recursion_map}
R_{pQ}(d;pa) = \frac{\totient(p)^2 - 1}{\totient(p)^2}R_{Q}(d;pa) + \frac{p}{\totient(p)^2}R_Q(d;a)
\end{equation}
\end{theorem}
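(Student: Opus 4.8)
The plan is to reduce the identity to a statement about the combinatorial sums appearing in Theorem~\ref{thm:MC-main}. For a residue $b$ coprime to $d$ put
$$S_Q(b):=\totient(Q)^2 R^*_Q(d;b)=\sum_{\substack{s,t=1\\(st,Q)=1\\ s\equiv b\Mod d}}^{Q}\gapcut{t-s}{Q},$$
and likewise $S_{pQ}(pa):=\totient(pQ)^2 R^*_{pQ}(d;pa)$. Since $\totient(pQ)=\totient(p)\totient(Q)=(p-1)\totient(Q)$, since $R_Q(d;b)=R^*_Q(d;b)-\bar R_Q(d)$ with $\bar R$ given by a closed form, and since the coefficients $\tfrac{(p-1)^2-1}{(p-1)^2}$ and $\tfrac{p}{(p-1)^2}$ in \eqref{eq:recursion_map} add up to $\tfrac{p}{p-1}$, a short substitution shows that \eqref{eq:recursion_map} is equivalent to the conjunction of two facts: (i) $\bar R_{pQ}(d)-\tfrac{p}{p-1}\bar R_Q(d)=\tfrac{pQ(p-2)}{2(p-1)\totient(d)}$, which is immediate from the formula for $\bar R$ and $\totient(pQ)=(p-1)\totient(Q)$; and (ii) the main identity
$$S_{pQ}(pa)=\big((p-1)^2-1\big)S_Q(pa)+p\,S_Q(a)+\frac{p(p-1)(p-2)}{2}\cdot\frac{Q\,\totient(Q)^2}{\totient(d)}.$$

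To prove (ii) I would pass to Chinese Remainder coordinates modulo $pQ=p\cdot Q$: an $s\in\{1,\dots,pQ\}$ coprime to $pQ$ corresponds to a pair $(\sigma,\bar s)$ with $\sigma=s\bmod p\in\{1,\dots,p-1\}$ and $\bar s=s\bmod Q$ coprime to $Q$, and $s\equiv pa\Mod d$ is equivalent to $\bar s\equiv pa\Mod d$ because $d\mid Q$. The key elementary fact is that for every integer $n$ one has $\gapcut{n}{pQ}=\gapcut{n}{Q}+Q\ell$ with $\ell\in\{0,\dots,p-1\}$ the unique index making $\gapcut{n}{Q}+Q\ell\equiv n\Mod p$; this holds because $\gapcut{n}{pQ}-\gapcut{n}{Q}$ is a multiple of $Q$ forced into the interval $[1-Q,\,pQ-1]$. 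Applying this with $n=t-s$ and writing $g=\gapcut{\bar t-\bar s}{Q}$, the summand of $S_{pQ}(pa)$ becomes $g+Q\ell$ with $\ell\equiv Q^{-1}\big((\tau-\sigma)-g\big)\Mod p$. Summing the $g$-part over $(\sigma,\tau)\in\{1,\dots,p-1\}^2$ contributes $(p-1)^2S_Q(pa)$. For the $Q\ell$-part, fix $\bar s,\bar t$: as $\tau$ ranges over $\{1,\dots,p-1\}$ the residue $\ell$ takes every value of $\{0,\dots,p-1\}$ except the one at $\tau\equiv0$; summing that omitted value over $\sigma\in\{1,\dots,p-1\}$ it in turn misses exactly one residue, namely $m_{\bar s,\bar t}$, the representative in $\{0,\dots,p-1\}$ of $-Q^{-1}g$ modulo $p$. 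A direct count then gives $\sum_{\sigma,\tau}\ell=\tfrac{p(p-1)(p-2)}{2}+m_{\bar s,\bar t}$, so the $Q\ell$-part equals $\tfrac{p(p-1)(p-2)}{2}\cdot\tfrac{Q\totient(Q)^2}{\totient(d)}+Q\sum_{\bar s,\bar t}m_{\bar s,\bar t}$, using that the number of admissible pairs $(\bar s,\bar t)$ is $\totient(Q)^2/\totient(d)$.

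The crux is evaluating $Q\sum_{\bar s,\bar t}m_{\bar s,\bar t}$, for which I would prove the identity $Q\,m_{\bar s,\bar t}=p\,\gapcut{p^{-1}\bar t-p^{-1}\bar s}{Q}-\gapcut{\bar t-\bar s}{Q}$: the integer $g+Q\,m_{\bar s,\bar t}$ is $\equiv0\Mod p$, is $\equiv g\Mod Q$, and lies in $\{1,\dots,pQ\}$, so by uniqueness in CRT it equals $p\,y$ where $y$ is the unique element of $\{1,\dots,Q\}$ with $p\,y\equiv g\Mod Q$, i.e.\ $y=\gapcut{p^{-1}g}{Q}=\gapcut{p^{-1}(\bar t-\bar s)}{Q}$. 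Summing over the admissible $(\bar s,\bar t)$ and then applying the bijective change of variables $\bar s\mapsto p^{-1}\bar s\bmod Q$, $\bar t\mapsto p^{-1}\bar t\bmod Q$ on residues coprime to $Q$ — which transforms the constraint $\bar s\equiv pa\Mod d$ into $\bar s\equiv a\Mod d$, since mod $d$ it is multiplication by $p^{-1}$ — yields $Q\sum_{\bar s,\bar t}m_{\bar s,\bar t}=p\,S_Q(a)-S_Q(pa)$. Substituting this back gives exactly (ii), and combining (i) and (ii) gives \eqref{eq:recursion_map}.

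I expect the main obstacles to be the two "sweep" arguments inside the $\ell$-sum — one must keep careful track of precisely which residue class is omitted at each of the two stages — and the CRT identity for $Q\,m_{\bar s,\bar t}$; the remainder is routine bookkeeping with $\totient(pQ)=(p-1)\totient(Q)$ and the closed forms for $\bar R_Q(d)$ and $\bar R_{pQ}(d)$.
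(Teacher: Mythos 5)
Your proposal is correct: the reduction to the identity on the raw sums $S_Q(b)=\totient(Q)^2R^*_Q(d;b)$ together with the $\bar R$-consistency check (i) is a valid reformulation of \eqref{eq:recursion_map} (the coefficients do sum to $\tfrac{p}{p-1}$, and $\bar R_{pQ}(d)-\tfrac{p}{p-1}\bar R_Q(d)=\tfrac{pQ(p-2)}{2(p-1)\totient(d)}$ indeed follows from the closed form), and your counting of the $\ell$-sum and the identity $g+Qm_{\bar s,\bar t}=p\gapcut{p^{-1}(\bar t-\bar s)}{Q}$ are sound, so (ii) and hence the theorem follow. The route differs from the paper's in how the bookkeeping is organized. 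The paper never invokes the explicit formula for $\bar R_Q(d)$: using Corollary \ref{cor:zero_sum} and a preliminary proposition it first replaces the congruence restriction $s\equiv pa\Mod d$ by forcing the $d$-component of the CRT symbol, so that $R_{pQ}(d;pa)$ becomes a sum of \emph{differences} $\gapcut{t-s,\,t-pa}{pQ_0,d}-\gapcut{t-s,\,t-s}{pQ_0,d}$ over all unrestricted $s,t$; it then splits off the prime $p$ via the three-way symbol $\gapcut{\cdot,\cdot,\cdot}{Q_0,d,p}$, and the additive constants (your $\tfrac{p(p-1)(p-2)}{2}\cdot\tfrac{Q\totient(Q)^2}{\totient(d)}$ term) cancel automatically in the differences when the permutation identity \eqref{eq:sum_permutation} is applied, before finishing with the multiplication-by-$p$ change of variables and $\gapcut{pr_1,pr_2,0}{Q_0,d,p}=p\gapcut{r_1,r_2}{Q_0,d}$, which mirrors your $p^{-1}$ substitution. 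Your version keeps the restricted sums and the two-way splitting $\Mod p$ and $\Mod Q$, tracks the omitted residues in the two sweeps explicitly, and then must verify that the resulting constant agrees with the shift in $\bar R$; this costs you the extra consistency check (i) but buys a more elementary and self-contained argument that avoids the paper's restriction-removal proposition and three-way CRT notation. Both proofs rest on the same core mechanism (CRT splitting at $p$, a residue-permutation count, a bracket-rescaling CRT identity, and multiplication by $p^{\pm1}$), so the difference is genuine but organizational rather than conceptual.
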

\begin{definition}
    Given $Q_1,\ldots, Q_k$ pairwise co-prime, we define $\gapcut{n_1,\ldots, n_k}{Q_1,\ldots, Q_k}$ to be the unique element in $[1, Q_1Q_2\ldots Q_k]$ such that $$\gapcut{n_1,\ldots, n_k}{Q_1,\ldots, Q_k} \equiv n_i \Mod{Q_i} \quad i = 1, \ldots, k$$
\end{definition}
Note that the definition is consistent with the definition of least positive residue $\gapcut{n}{Q}$. Because $\gapcut{n}{Q_1Q_2\ldots Q_k}$ is congruent to $n \Mod{Q_i}$ for $i = 1,\ldots, k$, we obtain
\begin{equation}\label{eq:chinese_decomp}
  \gapcut{n}{Q_1Q_2\ldots Q_k} = \gapcut{n,\ldots, n}{Q_1,\ldots, Q_k}
.\end{equation}

\begin{proof}[Proof of Theorem \ref{thm:recursion}]
By corollary \ref{cor:zero_sum}, 
\begin{equation}\label{eq:bias_const_sum}
R_{pQ}(d;pa) := \frac{1}{\totient(pQ)^2}\sum_{\substack{s,t = 1\\(st,pQ) = 1 \\s \equiv a \Mod d}}^{pQ} \gapcut{t - s}{pQ} - \frac{1}{\totient(pQ)^2\totient(d)}\sum_{\substack{s,t = 1\\(st,pQ) = 1}}^{pQ} \gapcut{t - s}{pQ}.
\end{equation}
By substituting \eqref{eq:chinese_decomp} into \eqref{eq:bias_const_sum}, we obtain that
\begin{equation}\label{eq:R_def}
  R_{pQ}(d;pa) = \frac{1}{\totient(pQ)^2}\left(\sum_{\substack{s = 1\\s \equiv pa \Mod d\\ (s,pQ) = 1}}^{pQ} \sum_{\substack{t = 1\\ (t,pQ) = 1}}^{pQ} \gapcut{t - s, t - s}{pQ_0,d} - \frac{1}{\totient(d)}\sum_{\substack{s,t = 1\\ (st,pQ) = 1}}^{pQ} \gapcut{t - s, t - s}{pQ_0,d}\right).
\end{equation}
The restriction of $s \equiv pa \Mod d$ ensures that the summand $\gapcut{t - s, t - s}{pQ_0, d}$ is congruent to $pa \Mod d$. Fortunately, we can eliminate the restriction by directly forcing the summand to be equivalent to $pa \Mod d$ by the following identity.

\begin{prop}
Suppose $d, p, Q_0 \geq 2$ are pairwise co-prime and $p$ is a prime. Let $Q = dQ_0$. Then for $a$ co-prime to $d$,
\begin{equation}\label{eq:sum_other_res_formula}
  \frac{1}{\totient(d)}\sum_{\substack{s,t = 1\\ (st,pQ) = 1}}^{pQ} \gapcut{t - s, t - pa}{pQ_0, d} = \sum_{\substack{s = 1\\s \equiv pa \Mod d\\ (s,pQ) = 1}}^{pQ} \sum_{\substack{t = 1\\ (t,pQ) = 1}}^{pQ} \gapcut{t - s, t - s}{pQ_0,d} 
.\end{equation}
\end{prop}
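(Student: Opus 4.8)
The plan is to collapse both sides to the same sum indexed by residue classes $r \Mod{pQ_0}$ coprime to $pQ_0$, exploiting the coprime factorization $pQ = (pQ_0)\cdot d$, which is valid since $p$, $Q_0$, $d$ are pairwise co-prime. The first observation is that on the right-hand side the constraint $s \equiv pa \Mod d$ forces $t - s \equiv t - pa \Mod d$, so $\gapcut{t - s, t - s}{pQ_0, d} = \gapcut{t - s, t - pa}{pQ_0, d}$; thus both sides share the common summand $g(s, t) := \gapcut{t - s, t - pa}{pQ_0, d}$, and this quantity depends on $s$ only through its residue $\Mod{pQ_0}$ (its first Chinese-remainder coordinate) and on $t$ only through its residue $\Mod{pQ}$.

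Next I would carry out the two counting steps. Fix a residue class $r \Mod{pQ_0}$ with $\gcd(r, pQ_0) = 1$ and set $h(r) := \sum_{1 \le t \le pQ,\ (t, pQ) = 1} g(r, t)$. For the left-hand side: the integers $s \in [1, pQ]$ with $\gcd(s, pQ) = 1$ and $s \equiv r \Mod{pQ_0}$ are exactly the $d$ members of that residue class in $[1,pQ]$ intersected with $\{s : \gcd(s, d) = 1\}$, and since $\gcd(pQ_0, d) = 1$ those $d$ numbers run through every class $\Mod d$ exactly once, so precisely $\totient(d)$ of them survive, each contributing $h(r)$ to the inner sum over $t$. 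For the right-hand side: we additionally impose $s \equiv pa \Mod d$, and since $\gcd(pa, d) = 1$ (because $p$ and $a$ are each co-prime to $d$), the Chinese Remainder Theorem yields exactly one such $s \in [1, pQ]$, which is automatically co-prime to $pQ$, again contributing $h(r)$.

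Summing over the residue classes $r$ coprime to $pQ_0$, the left-hand side becomes $\frac{1}{\totient(d)}\sum_r \totient(d)\, h(r) = \sum_r h(r)$ while the right-hand side becomes $\sum_r h(r)$, giving the claimed identity. The argument is purely combinatorial, with no analytic input; the only place to be careful is keeping the two-index notation $\gapcut{\,\cdot\,,\,\cdot\,}{pQ_0, d}$ straight and verifying in each count that the surviving values of $s$ are exactly those co-prime to $pQ$ — everything else is bare Chinese Remainder Theorem bookkeeping, so I do not anticipate a genuine obstacle.
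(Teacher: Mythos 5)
Your proof is correct and is essentially the paper's argument in slightly different packaging: both rest on the decomposition $\paren{\cZ/pQ\cZ}^\times \cong \paren{\cZ/pQ_0\cZ}^\times \times \paren{\cZ/d\cZ}^\times$, the observation that the summand depends on $s$ only through its residue $\Mod{pQ_0}$ (after using $t-s\equiv t-pa \Mod d$ on the constrained side), and the count of $\totient(d)$ admissible $s$ per class $\Mod{pQ_0}$ on the left versus exactly one on the right. No gap to report.
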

\begin{proof}
Because $pQ_0$ and $d$ are co-prime, $\paren{\cZ/pQ\cZ}^\times \cong \paren{\cZ/pQ_0\cZ}^\times \times \paren{\cZ/d\cZ}^\times$. Thus
\begin{equation}\label{eq:double_chinese}
    \frac{1}{\totient(d)}\sum_{\substack{s_1 = 1\\ (s_1, pQ_0) = 1}}^{pQ_0}\sum_{\substack{s_2 = 1\\ (s_2, d) = 1}}^{d} \sum_{\substack{t = 1\\ (t,pQ) = 1}}^{pQ} \gapcut{t - \gapcut{s_1, s_2}{pQ_0, d}, t - pa}{pQ_0, d} =
    \frac{1}{\totient(d)}\sum_{\substack{s,t = 1\\ (st,pQ) = 1}}^{pQ} \gapcut{t - s, t - pa}{pQ_0, d}.
\end{equation}
Note that $t - \gapcut{s_1, s_2}{pQ_0, d} \equiv t - s_1 \Mod{pQ_0}$. Thus the summand on the RHS of \eqref{eq:double_chinese} is independent of $s_2$. By replacing all $s_2$ with $pa$, we obtain that
\begin{equation*}
    \frac{1}{\totient(d)}\sum_{\substack{s,t = 1\\ (st,pQ) = 1}}^{pQ} \gapcut{t - s, t - pa}{pQ_0, d} = \sum_{\substack{s_1 = 1\\ (s_1, pQ_0) = 1}}^{pQ_0} \sum_{\substack{t = 1\\ (t,pQ) = 1}}^{pQ} \gapcut{t - \gapcut{s_1, pa}{pQ_0, d}, t - pa}{pQ_0, d}
.\end{equation*}
Note that
$$\set{\gapcut{s_1, pa}{pQ_0, d} : 1 \leq s_1 \leq pQ_0, (s_1, pQ_0) = 1} = \set{s : 1 \leq s \leq pQ, (s,pQ) = 1, s \equiv pa \Mod d}.$$ Thus
\begin{equation}\label{eq:recursion:sum_over_all}
    \frac{1}{\totient(d)}\sum_{\substack{s,t = 1\\ (st,pQ) = 1}}^{pQ} \gapcut{t - s, t - pa}{pQ_0, d} = \sum_{\substack{s = 1\\ s \equiv pa \Mod d\\ (s, pQ) = 1}}^{pQ} \sum_{\substack{t = 1\\ (t,pQ) = 1}}^{pQ} \gapcut{t - s, t - pa}{pQ_0, d}.\end{equation}
For $s \equiv pa \Mod d$, $t - pa \equiv t - s \Mod d$. Thus we can replace $t - pa$ with $t - s$ in the summand of the RHS of \eqref{eq:recursion:sum_over_all}.
\end{proof}
By substituting \eqref{eq:sum_other_res_formula} into \eqref{eq:R_def},
\begin{equation}\label{eq:sum_other_res}
  R_{pQ}(d;pa) = \frac{1}{\totient(pQ)^2\totient(d)}\sum_{\substack{s,t = 1\\ (st,pQ) = 1}}^{pQ} \Big(\gapcut{t - s, t - pa}{pQ_0, d} - \gapcut{t - s, t - s}{pQ_0, d}\Big)
\end{equation} and similarly,
\begin{equation}\label{eq:sum_other_res_Q}
  R_{Q}(d;a) = \frac{1}{\totient(Q)^2\totient(d)}\sum_{\substack{s,t = 1\\ (st,Q) = 1}}^{Q} \Big(\gapcut{t - s, t - a}{Q_0, d} - \gapcut{t - s, t - s}{Q_0, d}\Big)
\end{equation}

We now decompose \eqref{eq:sum_other_res} by the decomposition $\paren{\cZ/pQ_0\cZ}^\times \cong \paren{\cZ/p\cZ}^\times \times \paren{\cZ/Q_0\cZ}^\times$.
\begin{equation}\label{equ:factor_p}
  R_{pQ}(d;pa) = \frac{1}{\totient(pQ)^2\totient(d)}\sum_{\substack{s,t = 1\\ (st,Q) = 1}}^{Q} \sum_{s',t' = 1}^{p-1}\Big(\gapcut{t - s, t - pa, t' - s'}{Q_0, d, p} - \gapcut{t - s, t - s, t' - s'}{Q_0, d,p}\Big)
\end{equation}

Note that for any $r_1, r_2, r_3 \in \cZ$, 
$\gapcut{r_1, r_2, r_3}{Q_0, d, p} - \gapcut{r_1, r_2}{Q_0, d}$ is congruent to $0 \Mod Q$ and $r_3 - \gapcut{r_1, r_2}{Q_0, d} \Mod p$. By Chinese remainder theorem, for any fixed $r_1, r_2 \in \cZ$,
$$r_3 \mapsto \frac{1}{Q}(\gapcut{r_1, r_2, r_3}{Q_0,d,p} - \gapcut{r_1, r_2}{Q_0,d})$$
is a permutation on $\{0, 1, 2,\ldots, p- 1\}$.
By further fixing $r_2' \in \cZ$ and summing over the set $\{0, 1,\ldots, p - 1 \}$, we conclude that
\begin{equation}\label{eq:sum_permutation}
  \sum_{r_3 = 0}^{p-1}\paren{\gapcut{r_1,r_2, r_3}{Q_0,d,p} - \gapcut{r_1, r_2', r_3}{Q_0,d,p}} = p(\gapcut{r_1, r_2}{Q_0,d} - \gapcut{r_1, r_2'}{Q_0,d})
\end{equation}
We apply \eqref{eq:sum_permutation} to \eqref{equ:factor_p} as we sum over $s'$.
\begin{align*}
  R_{pQ}(d;pa) &= \frac{1}{\totient(pQ)^2\totient(d)}\sum_{\substack{s,t = 1\\ (st,Q) = 1}}^{Q} \sum_{t' = 1}^{p-1} p(\gapcut{t - s, t - pa}{Q_0, d} - \gapcut{t - s, t - s}{Q_0, d}) - \\
  & \frac{1}{\totient(pQ)^2\totient(d)}\sum_{\substack{s,t = 1\\ (st,Q) = 1}}^{Q} \sum_{t' = 1}^{p-1} (\gapcut{t - s, t - pa, t'}{Q_0, d, p} - \gapcut{t - s, t - s, t'}{Q_0, d, p})
\end{align*}
We apply \eqref{eq:sum_permutation} once more by summing over $t'$.
\begin{align*}
  R_{pQ}(d;pa) &= \frac{p(p-1)}{\totient(pQ)^2\totient(d)}\sum_{\substack{s,t = 1\\ (st,Q) = 1}}^{Q} (\gapcut{t - s, t - pa}{Q_0, d} - \gapcut{t - s, t - s}{Q_0, d}) -\\
  & \frac{1}{\totient(pQ)^2\totient(d)}\sum_{\substack{s,t = 1\\ (st,Q) = 1}}^{Q} p(\gapcut{t - s, t - pa}{Q_0, d} - \gapcut{t - s, t - s}{Q_0, d})+
  \\
  &\frac{1}{\totient(pQ)^2\totient(d)}\sum_{\substack{s,t = 1\\ (st,Q) = 1}}^{Q} (\gapcut{t - s, t - pa, 0}{Q_0,d,p} - \gapcut{t - s, t - s, 0}{Q_0, d, p})
\end{align*}
This simplifies to
\begin{align}\label{eq:sum_simplified}
  R_{pQ}(d;pa) &= \frac{p(p-2)}{\totient(p)^2}\frac{1}{\totient(Q)^2\totient(d)}\sum_{\substack{s,t = 1\\ (st,Q) = 1}}^{Q} (\gapcut{t - s, t - pa}{Q_0, d} - \gapcut{t - s, t - s}{Q_0, d})+ \nonumber
  \\
  &\frac{1}{\totient(pQ)^2\totient(d)}\sum_{\substack{s,t = 1\\ (st,Q) = 1}}^{Q} (\gapcut{t - s, t - pa, 0}{Q_0,d,p} - \gapcut{t - s, t - s, 0}{Q_0, d, p})
\end{align}
By \eqref{eq:sum_other_res_Q} the first term of \eqref{eq:sum_simplified} is $\frac{\totient(p)^2 - 1}{\totient(p)^2}R_Q(d;pa)$.
\begin{align}
  R_{pQ}(d;pa) &= \frac{\totient(p)^2 - 1}{\totient(p)^2}R_Q(d;pa)+\nonumber
  \\
  &\frac{1}{\totient(pQ)^2\totient(d)}\sum_{\substack{s,t = 1\\ (st,Q) = 1}}^{Q} (\gapcut{t - s, t - pa, 0}{Q_0,d,p} - \gapcut{t - s, t - s, 0}{Q_0, d, p})
\end{align}
Note that multiplication by $p$ is a permutation of $\paren{\cZ/ Q_0 \cZ}^\times$ and $\paren{\cZ/ d \cZ}^\times$. Thus one could sum over $ps$ and $pt$ instead of $s$ and $t$.
\begin{align}
  R_{pQ}(d;pa) &= \frac{\totient(p)^2 - 1}{\totient(p)^2}R_Q(d;pa)+\nonumber
  \\
  &\frac{1}{\totient(pQ)^2\totient(d)}\sum_{\substack{s,t = 1\\ (st,Q) = 1}}^{Q} (\gapcut{pt - ps, pt - pa, 0}{Q_0,d,p} - \gapcut{pt - ps, pt - ps, 0}{Q_0, d, p})
\end{align}
By the Chinese remainder theorem, 
\begin{equation}\label{eq:chinese_p}
  \gapcut{pr_1, pr_2, 0}{Q_0,d,p} = p\gapcut{r_1, r_2}{Q_0,d}\quad r_1, r_2 \in \cZ.
.\end{equation}
\begin{align}
  R_{pQ}(d;pa) &= \frac{\totient(p)^2 - 1}{\totient(p)^2}R_Q(d;pa)+\nonumber
  \\
  &p\frac{1}{\totient(pQ)^2\totient(d)}\sum_{\substack{s,t = 1\\ (st,Q) = 1}}^{Q} (\gapcut{t - s, t - a}{Q_0,d} - \gapcut{t - s, t - s}{Q_0, d})
\end{align}
On substituting \eqref{eq:sum_other_res_Q}, we conclude that
\begin{equation}
  R_{pQ}(d;pa) = \frac{\totient(p)^2 - 1}{\totient(p)^2}R_Q(d;pa)
  + \frac{p}{\totient(p)^2}R_Q(d;a)
\end{equation}
\end{proof}

\subsection{Computation of Modified \cramer{} Bias Constants}
We compute bias constants $R_Q(d;a)$ utilizing the recursive algorithm in Theorem~\ref{thm:recursion}.
For modulus $d = p$ a prime, the simplest case is $Q = d$, and the bias constant is given by 
\begin{equation}
    R_d(d;a) = \frac{d}{\totient(d)^2}\paren{\frac{a}{d} - \frac{1}{2}} \quad 1 \leq a \leq d - 1
.\end{equation}
These constants $R_d(d;a)$ are increasing as a function of $a$ for $1 \leq a \leq d - 1$. 
For $d = 3,5,7$ $R_d(d;a)$ significantly differ from the empirical data on bias constants $R(x;d,a)$ given in Tables~\ref{tab:r_mod3}, \ref{tab:r_mod5}, \ref{tab:r_mod7} in Section~\ref{sec:3}. 
The empirical data also disagrees in sign for $d = 3$ and the constants oscillate in $a$ for $d = 5$ and $d = 7$.

We now study the effect of larger sieve modulus $Q$ on the modified \cramer{} bias constants, which seems to improves our numerical result.
In particular, we consider the case of a modified \cramer{} model with an initial sieve over all
the prime numbers less than or equal to $T$. We let our sieve modulus $Q = T\#$, where the {\em primorial at $T$}, is defined by
\begin{equation}
\label{def: primorial}
T\# := \prod_{p \le T} p.
\end{equation}
The notation $T\#$  for primorials follows Caldwell and Gallot \cite{CaldwellG2002}. 
Thus $\Tilde{\Phi}_{T\#}(x;d,a)$ is a random prime running function corresponding to 
the modified \cramer{} model with initial sieving by all primes less than or equal to $T$.

Tables~\ref{tab:more_sieve_mod3}, \ref{tab:more_sieve_mod5}, and \ref{tab:more_sieve_mod7} give values
of \cramer{} bias constants at various primorials. 

\begin{table}[H]
\centering
	\begin{tabular}{|c|c|c|c|c|c|c|}
	\hline
	    &\multicolumn{5}{|c|}{\cramer{} model bias constants} & rescaled bias function\\
	\hline
        \diagbox{a}{Q} & $Q = 3$ & $Q = 3\#$ & $Q = 10\#$ & $Q = 100\#$ & $Q = 1000\#$ & $R(10^{12}; 3, a)$\\
    \hline
		a = 1 & -0.125 & 0.25 & 0.1823 & 0.1599 & 0.1569 & 0.2022\\
	\hline
		a = 2 & 0.125 & -0.25 & -0.1823 & -0.1599 & -0.1569 & -0.2022\\
	\hline
	\end{tabular}
	\caption{Table of bias constant $R_{Q}(3;a)$ for various sieve moduli $Q$. The right most column is the empirical data $R(10^{12};3,a)$.}
	\label{tab:more_sieve_mod3}
\end{table}

\begin{table}[H]
\centering
	\begin{tabular}{|c|c|c|c|c|c|c|}
	\hline
	    &\multicolumn{5}{|c|}{\cramer{} model bias constants} & rescaled bias function\\
	\hline
        \diagbox{a}{Q} & $Q = 5$ & $Q = 5\#$ & $Q = 10\#$ & $Q = 100\#$ & $Q = 1000\#$ & $R(10^{12}; 5, a)$\\
    \hline
		a = 1 & -0.09375 & -0.0938 & -0.0547 & -0.0699 & -0.0685 & -0.0703\\
	\hline
		a = 2 & -0.03125 & -0.1875 & -0.2005 & -0.2027 & -0.2043 & -0.221\\
	\hline
        a = 3 & 0.03125 & 0.1875 & 0.2005 & 0.2027 & 0.2043 & 0.2059\\
	\hline
        a = 4 & 0.09375 & 0.0938 & 0.0547	& 0.0699 & 0.0685 & 0.0855\\
	\hline
	\end{tabular}
	\caption{Table of bias constant $R_{Q}(5;a)$ for various sieve moduli $Q$. The right most column is the empirical data $R(10^{12};5,a)$.}
	\label{tab:more_sieve_mod5}
\end{table}

\begin{table}[H]
\centering
	\begin{tabular}{|c|c|c|c|c|c|} 
	\hline
	&\multicolumn{4}{|c|}{\cramer{} model bias constants} & rescaled bias function\\
	\hline
    \diagbox{a}{Q} & $Q = 7$ & $Q = 10\#$ & $Q = 100\#$ & $Q = 1000\#$ & $R(10^{12};7;a)$\\
    \hline
		a = 1 &-0.0964 & 0.1432 & 0.1303 & 0.1310 & 0.1461\\
		\hline
		a = 2 & -0.0417 & -0.0781 & -0.0749 & -0.0753 & -0.0680\\
		\hline
      a = 3 & -0.0139 & 0.0651 & 0.0554 & 0.0557 & 0.0506\\
		\hline
       a = 4 & 0.0139 & -0.0651	& -0.0554 & -0.0557 & -0.0571\\
		\hline
		a = 5 & 0.0417 & 0.0781	& 0.0749 & 0.0753 & 0.0626\\
		\hline
		a = 6 & 0.0964 & -0.1432	& -0.1303 & -0.1310 & -0.1343\\
		\hline
	\end{tabular}
	\caption{Table of bias constant $R_{Q}(7;a)$ for various sieve moduli $Q$. The right most column is the empirical data $R(10^{12};7,a)$.}
	\label{tab:more_sieve_mod7}
\end{table}

The bias constant for the expected values in these modified \cramer{} models with sieve modulus of $Q=1000\#$ exhibit numerical resemblance with the empirical data for $d = 5$ and $7$. However, for the case $d = 3$, there are significant deviations from the empirical data.

Note that as $T$ varies in these tables, the values of the constants $R_{T\#}(d; a)$ may be showing oscillations as $T$ increases.

\section{Concluding Remarks}\label{sec:conclusion}
Section \ref{sec:cramer}  presented a modified \cramer{} model which exhibits  a mechanism that can lead to biases of order $x / \log x$. 
Our data in Section \ref{sec:bias_compute} computes  bias constants for this model for primorials $T\#$ that roughly agree with the empirical data in Section \ref{sec:3} for $d = 5$ and $d = 7$.

The choice of taking the sieve modulus $Q$ to run through primorials $T\#$  in the modified  \cramer{} model  is significant.
Based on the choice of the sequence of integers $\set{S_i}_{i = 1}^\infty$ with $S_i | S_{i + 1}$, $R_{S_i}(d;a)$ could diverge or converge to a value that depends on the choice of $\set{S_i}_{i = 1}^\infty$.
For example, fix $d \geq 2$ prime and choose $a$ with $(a,d) = 1$. 
Define 
$$Q_T = d \prod_{\substack{p \leq T\\ p \equiv 1 \Mod d}}p.$$
By Theorem~\ref{thm:recursion},
\begin{equation}
    R_{Q_T}(d;a) = \paren{\prod_{\substack{p \leq T\\ p \equiv 1 \Mod d}} \frac{\phi(p)^2 + p - 1}{\phi(p)^2}} R_d(d;a) = \paren{\prod_{\substack{p \leq T\\ p \equiv 1 \Mod d}} \frac{p}{p - 1}} R_d(d;a).
\end{equation}
It is known that
$$\paren{\prod_{\substack{p \leq T\\ p \equiv 1 \Mod d}} \frac{p}{p - 1}} \sim c (\log(x))^{1/\phi(d)}$$
for some constant $c > 0$
(see \cite{LanMerten}, \cite{amertens}).
In particular,  for this choice of $Q_T$, the constants $R_{Q_T}(d;a)$ diverge 
as $T$ grows to infinity.

We do not address the question of whether the bias constants $R_Q(d;a)$ produced by this model (letting $Q \to \infty$ through the primorials) will necessarily agree with the bias constants $R(d;a)$ 
asserted to exist in Conjecture~\ref{conj:running_error}.

We defined the prime running functions $\Phi(x;d,a)$ as summing  gaps between primes $p_k \equiv a \Mod d$ below $x$ and the next following prime $p_{k+1}$, up to $x$. However, one 
also  consider the \emph{reversed prime running functions} $\Phi^R(x;d,a)$ which puts instead a congruence condition on the upper endpoint of the interval 
  $p_{k+1} \equiv a \Mod d$ and putting no congruence condition on $p_k$.  By an analysis similar to that made in Section \ref{sec:cramer}, the  modified \cramer{} model predicts 
$$\Phi^R(x;d,a) = \frac{1}{\phi(d)}x - R(d;a)\frac{x}{\log x} + o(\frac{x}{\log x}),$$  
with the bias term having the opposite sign as for the prime running function.

A more refined analysis of the biases of prime running function and its generalizations can be done based on the Hardy-Littlewood $k$-tuple conjecture, following ideas in the paper of Lemke-Oliver and \sound~\cite{LOS16}.
We leave this topic for future work.

 \section*{Acknowledgments}
The author made an initial observation similar to Figure 2
with Upamanyu Sharma, whom he thanks for the help
in the initial computation of prime running functions. The author
thanks J. C. Lagarias for mentoring this project, for advice on writing, and supplying references.
The author thanks Corey Everlove and Djordje Mili\'cevi\'c for helpful comments. This work was partially supported by NSF grant DMS-1701576.

\section{Appendix: Proof of Lemma \ref{lemma:geo_simplify}}
\begin{customlemma}{\ref{lemma:geo_simplify}}
\GeoApproxLemma
\end{customlemma}

\begin{proof}
We begin by decomposing $\abs{T_1(n) - T_2(n)}$ into two parts using the triangle inequality.
\begin{equation*}
  \abs{T_1^m(n) - T_2^m(n)} \leq H_1^m(n) + H_2^m(n)
\end{equation*}
where
\begin{equation}\label{eq:def_h1}
  H_1^m(n) := \sum_{k>0}\paren{u_{n + k} - u_n}^m\paren{\frac{1}{\log u_n} - \frac{1}{\log u_{n+k}}}\paren{1 - \frac{c}{\log u_n }}^{k-1}
\end{equation}
\begin{equation}\label{eq:def_h2}
  H_2^m(n) := \sum_{k > 1}\frac{\paren{u_{n+k} - u_n}^m}{\log u_{n + k}}\left[\prod_{j = 1}^{k-1}\paren{1 - \frac{c}{\log u_{n + j}}} - \paren{1 - \frac{c}{(\log u_n)}}^{k-1}\right]
.\end{equation}
Thus it suffices to show the following inequalities.
\begin{equation}\label{eq:approx_term_1}
  H_1^m(n) = \bigo{\frac{(\log n)^{m}}{n}}
\end{equation}
\begin{equation}\label{eq:approx_term_2}
  H_2^m(n) = \bigo{\frac{(\log n)^{m + \epsilon}}{n}}
.\end{equation}

We first prove \eqref{eq:approx_term_1}. Note that $\frac{d}{dx}\frac{1}{\log x} = -\frac{1}{x(\log x)^2}$ is decreasing in magnitude. Thus by mean value theorem,
\begin{equation}\label{eq:inverse_log_diff}
  \frac{1}{\log x} - \frac{1}{\log(x + t)} \leq \frac{t}{x(\log x)^2} \quad t \geq 0
\end{equation}
By substituting \eqref{eq:inverse_log_diff} into \eqref{eq:def_h1} we establish that
\begin{equation}\label{eq:log_diff_subbed}
  H_1^m(n) \leq \sum_{k > 0}\frac{(u_{n+k} - u_n)^{m + 1}}{u_n(\log u_n)^2}\paren{1 - \frac{c}{\log u_n}}^{k-1}
\end{equation}
Note that for any positive integer $a$, there exists some integer $u \in [a, a + Q)$ co-prime to $Q$. Thus the following holds
\begin{equation}\label{eq:unseive_bound}
  n \leq u_n \leq Qn
\end{equation}
\begin{equation}\label{eq:unseive_gap_bound}
  k \leq u_{n + k} - u_n \leq Qk \quad k = 0, 1, 2,\ldots
.\end{equation}
By substituting \eqref{eq:unseive_bound} and \eqref{eq:unseive_gap_bound} into \eqref{eq:log_diff_subbed}, we obtain that
\begin{equation}
  H_1^m(n) \leq \frac{Q^{m+1}}{u_n\log u_n}\sum_{k > 0}\frac{k^{m + 1}}{\log u_n}\paren{1 - \frac{c}{\log u_n}}^{k-1}
.\end{equation}
Let $Y_p$ be a geometric random variable with parameter $p = \frac{c}{\log u_n}$.
By substituting the definition for the $m+1\textsuperscript{th}$ moment, we obtain that
\begin{equation}\label{eq:H_1_bound_simp}
    H_1^m(n) \leq \frac{Q^{m+1}}{c u_n \log u_n} \EE\left[Y^{m+1}_p\right]
.\end{equation}

We will use the moment generating function $M(t) = \EE[\exp(t Y_p)]$ to bound the growth of $\EE\left[Y^{m+1}\right]$. 
By direct computation, $M(t) = \frac{pe^t}{1 - e^t(1 - p)}$. By utilizing the fact that $\frac{\partial^{m + 1}}{\partial t^{m + 1}}M(t) |_{t = 0} = \EE[Y_p^{m + 1}]$,
we conclude that
\begin{equation}\label{eq:geo_moment_bound}
\EE[Y^{m + 1}_p] = \bigo{\frac{1}{p^{m + 1}}}_{p \in (0, 1]}.
\end{equation}

By substituting \eqref{eq:geo_moment_bound} into \eqref{eq:H_1_bound_simp} and $p = \frac{c}{\log u_n}$, we obtain that
\begin{equation}\label{eq:approx_sub1}
    H_1^m(n)= \bigo{\frac{\log(u_n)^m}{u_n}}
.\end{equation}

Now all we have left is to prove \eqref{eq:approx_term_2}.
Note that for any $k > 0$,
\begin{equation*}
  \paren{1 - \frac{c}{\log u_n}}^{k-1} \leq \prod_{j = 1}^{k-1}\paren{1-\frac{c}{\log u_{n+j}}} \leq \paren{1 - \frac{c}{\log u_{n+k}}}^{k-1}
.\end{equation*}
Thus
\begin{equation}\label{eq:h2_bound}
  H_2^m(n) \leq \sum_{k > 0}\frac{\paren{u_{n+k} - u_n}^{m}}{\log u_{n+k}}\left[\paren{1-\frac{c}{\log u_{n+k}}}^{k-1} - \paren{1 - \frac{c}{\log u_n}}^{k-1}\right]
.\end{equation}
Note that $\frac{d}{dx} \paren{1 - \frac{c}{\log x}}^k = \frac{ck}{x(\log x)^2}\paren{1 - \frac{c}{\log x}}^{k-1}$. Thus the derivative of the function $(1 - \frac{c}{\log x})^k$ is bounded above by $\frac{ck}{a(\log a)^2}\paren{1 - \frac{c}{\log b}}^{k-1}$ over the interval $x \in [a, b]$. By mean value theorem, we establish that for $e^c < a < b$,
\begin{equation}\label{eq:log_pow_diff}
  \paren{1 - \frac{c}{\log b}}^k - \paren{1 - \frac{c}{\log a}}^k \leq \frac{ck(b - a)}{a(\log a)^2}\paren{1 - \frac{c}{\log b}}^{k-1}
.\end{equation}
By substituting \eqref{eq:log_pow_diff} into \eqref{eq:h2_bound}, we establish that for sufficiently large $n$,
\begin{equation}\label{eq:h2mvt}
  H_2^m(n) \leq c\sum_{k > 0}\frac{\paren{u_{n+k}-u_n}^{m + 1}(k-1)}{\log(u_{n+k})u_n(\log u_n)^2}\paren{1 - \frac{c}{\log u_{n+k}}}^{k-2}
.\end{equation}
By substituting \eqref{eq:unseive_bound} and \eqref{eq:unseive_gap_bound} into \eqref{eq:h2mvt}, we obtain that
\begin{equation*}
  H_2^m(n) \leq \frac{c Q^m}{n(\log n)^3}\sum_{k > 0}{k^{m+1}(k-1)}\paren{1 - \frac{c}{\log(Q(n+k))}}^{k-2},
\end{equation*}.

By noting that $(1 - \frac{c}{\log(Q(n+k))})^{-2} \leq 2$ for sufficiently large $n$, we obtain that
\begin{equation*}
  H_2^m(n) \leq 2\frac{c Q^m}{n(\log n)^3}\sum_{k > 0}{k^{m+2}}\paren{1 - \frac{c}{\log(Q(n+k))}}^{k},
\end{equation*} for sufficiently large $n$.

Because $1 - x \leq e^{-x}$ for all $x \in \cR$, we know that
\begin{equation}
  H_2^m(n) \leq 2\frac{cQ^m}{n(\log n)^3}\sum_{k > 0}{k^{m + 2}}e^{-ck / \log(Q(n + k))}
\end{equation}
for sufficiently large $n$. Let $P = \frac{m + 3}{m + 3 + \epsilon}$.
Since $P < 1$, there exists a constant $C_P > 0$ such that for all sufficiently large $n$, 
\begin{equation}
  \frac{k}{\log(Q(n + k))} \geq \frac{k^P}{C_P\log n } \quad k = 1,2,\ldots
\end{equation}
Thus for sufficiently large $n$,
\begin{equation}
  H_2^m(n) \leq 2\frac{cQ^m}{n(\log n)^3}\sum_{k > 0}{k^{m + 2}}e^{-c\frac{k^P}{C_a\log n}}
\end{equation}
\begin{equation}\label{eq:h2_bound_simp}
  H_2^m(n) \leq 2\frac{cQ^m}{n(\log n)^3}\int_0^\infty {(t + 1)^{m + 2}}e^{-c\frac{t^P}{C_P\log n}} dt
\end{equation}
By substituting $u = t^a$, we obtain that

\begin{equation}\label{eq:integral_bound}
  \int_0^\infty {(t + 1)^{m + 2}}e^{-c\frac{t^P}{C_P\log n}} dt = \bigo{(\log n)^{m + 3 + \epsilon}}
.\end{equation}
By substituting \eqref{eq:integral_bound} into \eqref{eq:h2_bound_simp}, we conclude \eqref{eq:approx_term_2}.
\end{proof}

\end{document}